\definecolor{c1}{named}{red}
\definecolor{c2}{named}{blue}
\definecolor{c3}{named}{brown}
\definecolor{c4}{named}{cyan}
\definecolor{c5}{named}{green}
\definecolor{c6}{named}{magenta}
\definecolor{c7}{named}{orange}
\definecolor{c8}{named}{violet}
\newenvironment{customlegend}[1][]{%
	\begingroup
	\csname pgfplots@init@cleared@structures\endcsname
	\pgfplotsset{#1}%
}{%
	\csname pgfplots@createlegend\endcsname
	\endgroup
}%
\def\addlegendimage{\csname pgfplots@addlegendimage\endcsname}
\begin{document}

\newtheorem{theorem}{Theorem}[section]
\newtheorem{theorema}{Theorem}						
\renewcommand{\thetheorema}{\Alph{theorema}}
\providecommand*{\theoremaautorefname}{Theorem} 
\newtheorem*{theorem*}{Theorem}

\newtheorem*{maintheorem}{Main Theorem}

\newtheorem*{conj}{Conjecture}

\newaliascnt{lemma}{theorem}
\newtheorem{lemma}[lemma]{Lemma}
\aliascntresetthe{lemma}
\providecommand*{\lemmaautorefname}{Lemma} 

\newaliascnt{cor}{theorem}
\newtheorem{cor}[cor]{Corollary}
\aliascntresetthe{cor}
\providecommand*{\corautorefname}{Corollary} 

\newtheorem*{rem}{Remark}

\newtheorem*{defi}{Definition}

\newaliascnt{prop}{theorem}
\newtheorem{prop}[prop]{Proposition}
\aliascntresetthe{prop}
\providecommand*{\propautorefname}{Proposition} 



\newcommand{\enmath}[1]{\ensuremath{#1}\xspace} 
\newcommand{\defeq}{\stackrel{\mathrm{def}}{=}}
\newcommand{\RR}{\mathbb{R}}
\newcommand{\ZZ}{\mathbb{Z}}
\newcommand{\NN}{\mathbb{N}}
\newcommand{\CC}{\mathbb{C}}
\newcommand{\HH}{\mathbb{H}}
\newcommand{\KK}{\mathbb{K}}
\newcommand{\OO}{\mathcal{O}}
\newcommand{\Q}{\mathcal{Q}}
\newcommand{\fix}[1]{\operatorname{Fix}\left(#1\right)}
\newcommand{\tr}[1]{\operatorname{tr}\left(#1\right)}
\newcommand{\rk}{\operatorname{rank}}
\newcommand{\codim}{\operatorname{codim}}
\newcommand{\FF}{\mathcal F}
\newcommand{\GG}{\mathcal G}
\newcommand{\Hh}{\mathcal H}
\newcommand{\ROT}[1]{\textbf{\textcolor{red}{(#1)}}}			
\newcommand{\Hom}[2]{\operatorname{Hom}_\Sigma (#1, #2)}
\newcommand{\End}[1]{\operatorname{End}_\Sigma (#1)}
\newcommand{\Endnil}[1]{\operatorname{End}_\Sigma^{\mathrm{nil}} (#1)}
\newcommand{\Endnilnull}[1]{\operatorname{End}_\Sigma^{\mathrm{b-nil}} (#1)}
\newcommand{\spec}[1]{\operatorname{spec(#1)}}
\newcommand{\Ind}[1]{\operatorname{ind}\left( #1 \right)}
\newcommand{\im}[1]{\operatorname{Im}\left( #1 \right)}
\newcommand{\Endquot}[1]{\End{#1} / \Endnil{#1}}
\newcommand{\Iso}{\operatorname{Iso}\left( W_1 \oplus \ldots \oplus W_k \right)}
\newcommand{\ind}[1]{\dim \Endquot{#1}}
\newcommand{\gker}[1]{\operatorname{ker}_0 (#1)}
\newcommand{\redim}[1]{\operatorname{im}_0 (#1)}
\newcommand{\M}[2]{\operatorname{M} (#1 ; #2 )}
\newcommand{\Mi}[2]{\operatorname{M}_i \left(#1 ; #2 \right)}
\newcommand{\Mr}[2]{\mathbf{M}_r \left(#1 ; #2 \right)}
\newcommand{\Mnil}[2]{\operatorname{M}_1^{\mathrm{nil}} \left(#1 ; #2 \right)}
\newcommand{\quadmat}[2]{\begin{pmatrix}
		#1 ^{11}	& \cdots	& #1 ^{1 #2} \\
		\vdots		&			& \vdots \\
		#1 ^{#2 1}	& \cdots	& #1 ^{#2 #2}
\end{pmatrix}}

\renewcommand*{\mkbibnamefamily}[1]{\textsc{#1}}
\renewcommand*{\finalnamedelim}{%
	\ifnumgreater{\value{liststop}}{2}{}{}%
	\addspace\&\space}
\renewcommand\autocite[1]{\textcite{#1}}

\renewcommand{\sectionautorefname}{Section}
\renewcommand{\subsectionautorefname}{Subsection}

\numberwithin{equation}{section} 
\renewcommand{\theequation}{\arabic{section}.\arabic{equation}} 


\title{Generic steady state bifurcations in monoid equivariant dynamics with applications in homogeneous coupled cell systems}
\author{Sören Schwenker\thanks{Department of Mathematics, Universität Hamburg, Germany, soeren.schwenker@uni-hamburg.de}}
\date{}
\maketitle

\begin{abstract}
	\noindent
	We prove that steady state bifurcations in finite-dimensional dynamical systems that are symmetric with respect to a monoid representation generically occur along an absolutely indecomposable subrepresentation. This is stated as a conjecture in B. Rink and J. Sanders, ``Coupled cell networks and their hidden symmetries'', SIAM J. Math. Anal., 46 (2014). It is a generalization of the well-known fact that generic steady state bifurcations in equivariant dynamical systems occur along an absolutely irreducible subrepresentation if the symmetries form a group -- finite or compact Lie. Our generalization also includes non-compact symmetry groups. The result has applications in bifurcation theory of homogeneous coupled cell networks as they can be embedded (under mild additional assumptions) into monoid equivariant systems.
\end{abstract}

\section*{Introduction}

In the last decades equivariant dynamics has gained a lot of attention. Symmetries arise frequently in nature and applications and can lead to numerous staggering phenomena such as pattern formation or synchronization of behavior. In equivariant dynamical systems, symmetries provide the underlying structure to explain such unexpected behavior. Examples are dynamically invariant subspaces, spectral degeneracies, complicated bifurcations, and many more. Research in this area has been very active and lots of remarkable results have been established. More details on equivariant dynamics can be found, for example, in \cite{Chossat.2000, Field.2007, Golubitsky.1985, Golubitsky.1988} with no claim of this list being complete. However, in most of these studies the symmetries in question need to have an underlying structure themselves. In particular they are required to form a group, most often a finite one or a compact Lie group.

In recent years the field of network dynamical systems has received increased activity and developments therein have called for less restrictive classes of symmetries. Network dynamical systems exhibit phenomena that resemble those that have been encountered in the context of equivariant dynamics such as multiple eigenvalues, high dimensional center subspaces, and similarly unexpected and complex bifurcation behavior. As a matter of fact, in this context we encounter dynamical systems that are equivariant with respect to linear symmetries as well. These, however, often do not form a group but less restrictive structures such as a groupoid (see \cite{Golubitsky.2006}), a semigroup or a monoid (see \cite{Rink.2014}).

In this article we investigate generic steady state bifurcations in one-parameter families of smooth vector fields that are equivariant with respect to a monoid. \textcite{Nijholt.2017c,Nijholt.2017d,Nijholt.2016,Rink.2013,Rink.2014,Rink.2015} show that under mild additional assumptions homogeneous coupled cell networks can be regarded as the restriction of equivariant systems to some invariant subspace. They prove numerous results on how to exploit these symmetries to investigate the dynamics of the network such as Lyapunov-Schmidt reduction \cite{Rink.2014}, normal forms \cite{Rink.2015}, center manifold reduction \cite{Nijholt.2017c} or by determining bifurcations in the extended system \cite{Nijholt.2017d}. These techniques provide an easy to apply step-by-step machinery to determine generic bifurcations in the network. However, one part could not be completely clarified: determining generic center subspaces of the equivariant system that will lead to bifurcations. In a general one-parameter family -- without any symmetry -- a generic steady state bifurcation occurs along a one-dimensional subspace as the kernel of the linearization is generically one-dimensional. In the context of group equivariant dynamics the picture is more complicated as symmetry may force kernels to be higher dimensional. However, it is well-known that generic steady state bifurcations occur along an absolutely irreducible subrepresentation. That is an invariant subspace that contains no further nontrivial invariant subspaces and whose endomorphism space is isomorphic to the real numbers.

A similar result so far only exists in a special case for one-parameter families of systems that are monoid equivariant. A monoid representation decomposes into indecomposable subspaces meaning they cannot be decomposed any further. These are once again invariant under the dynamics. Just as in the group case they can be of three types depending on their endomorphism space. \autocite{Rink.2014} show that whenever the representation decomposes into indecomposable subrepresentations that are pairwise nonisomorphic, steady state bifurcations in one-parameter families generically occur along an absolutely indecomposable subprepresentation. It is already anticipated in their paper that this result holds in full generality. However, no proof is given. In this article we close this gap by proving
\begin{maintheorem}
	Steady state bifurcations in one-parameter families of systems that are equivariant with respect to a finite-dimensional representation of a monoid generically occur along an absolutely indecomposable subrepresentation.
\end{maintheorem}
\noindent
This is the immediate generalization of the aforementioned result in the group context. It can be seen uncoupled from network dynamical systems.

At this point, we would like to mention an article by Nijholt and Rink that is currently available as a preprint (see \autocite{Nijholt.2017}). Therein the authors address the same question that is already mentioned in \cite{Rink.2014} from a much more general point of view. They investigate generic bifurcations in $k$-parameter families of monoid equivariant vector fields by determining which configuration of invariant subrepresentations generically occurs as generalized kernel or as center subspace. Their article also includes the results of this text as a special case. The proof, however, is a lot more involved as it is presented in an algebraic setting and makes use of algebraic geometry as well as noncommutative algebra -- most importantly in the form of Wedderburn's structure theorem. On the other hand, our result and the proof are presented from the perspective of the application to homogeneous coupled cell networks. For that reason it is not suitable to provide the result in even more generality. It may, however, be seen as an easy step into the theory of monoid representations coming from network dynamical systems.

We use \autoref{sec:rm} to wrap up the basics of representation theory of monoids that are needed throughout the text. The results are stated without proofs as they are nicely presented in \cite{Rink.2014}. \autoref{sec:gb} contains the proof of the main result. We start in \autoref{subsec:ic} by investigating nilpotent endomorphisms of representations that are direct sums of subrepresentations which are all pairwise isomorphic. These form building blocks of arbitrary representations and are called isotypic components. In \autoref{subsec:gb} we complete the proof by reducing the question of generic generalized kernels in an arbitrary representation to that of determining the nilpotent endomorphisms of its isotypic components. Finally, in \autoref{sec:ex} we investigate a homogeneous coupled cell system with eight cells as an example to illustrate the application of the main theorem. Some technical details on submanifolds of matrix manifolds are postponed to the appendix.

\section{Finite dimensional real monoid representations}
\label{sec:rm}

We use this chapter to summarize some results on finite-dimensional real representations of monoids. Note that we do not impose any further restrictions on the monoid throughout the whole article. It may be finite or infinite and, therefore, includes finite or compact symmetry groups. But also noncompact groups are included. The proofs for the results in this section are omitted. In \autocite{Rink.2014} they are stated for finite monoids. However, only the fact that the representations are finite-dimensional is used, and therefore they also apply in the case in consideration here.

\begin{defi}
	The tuple $(\Sigma, \cdot)$, where $\Sigma$ is a set and $\cdot \colon \Sigma \times \Sigma \to \Sigma$ is a map so that
	\begin{enumerate}[(i)]
		\item $ (\sigma \cdot \sigma') \cdot \tilde{\sigma} = \sigma \cdot ( \sigma' \cdot \tilde{\sigma} )$ for all $\sigma, \sigma', \tilde{\sigma} \in \Sigma$ \hfill (associativity)
		\item there exists an element $1 \in \Sigma$ so that $1 \cdot \sigma = \sigma \cdot 1 = \sigma$ for all $\sigma \in \Sigma $ \hfill (neutral element)
	\end{enumerate}
	is called a \emph{monoid}. We abbreviate $\sigma \sigma'= \sigma \cdot \sigma'$ and call $\Sigma$ the monoid if the multiplication is clear from context.
\end{defi}
The definition of a \emph{representation} is well known in the context of groups and we define it accordingly for monoids. Let $\Sigma$ be a monoid and let $V$ be a finite-dimensional real vector space and $\mathfrak{gl}(V)$ the space of linear maps from $V$ to itself. We call the monoid homomorphism
\[ A \colon \Sigma \to \mathfrak{gl}(V), \quad \sigma \mapsto A_\sigma, \]
with
\[ A_{\sigma \sigma'} = A_{\sigma} A_{\sigma'} \quad \text{for all} \quad \sigma, \sigma' \in \Sigma \]
and $A_1 = \mathbb{1}_V$ for the neutral element, a \emph{representation} of $\Sigma$. When the homomorphism is known from context or not needed in its explicit form we also call $V$ a representation of $\Sigma$. A subspace $W \subset V$ is called a \emph{subrepresentation} if it is invariant under the action of the monoid:
\[ A_{\sigma} W \subset W \quad \text{for all} \quad \sigma \in \Sigma. \]
The representation $V$ is called \emph{irreducible} if there exists no proper subrepresentation $W \subset V$ with $W \ne \lbrace 0 \rbrace, V$. It is called \emph{indecomposable} if there are no two proper subrepresentations $W, W'$ with $V= W \oplus W'$. Unlike for group representations, these two properties are not equivalent as an indecomposable representation need not be irreducible. However, the definition of indecomposability directly yields the existence of a decomposition of $V$ into indecomposable subspaces
\[ V = W_1 \oplus \dotso \oplus W_s. \]
This decomposition is unique up to equivalence of subrepresentations, which is stated in the following theorem.
\begin{theorem}[Krull-Schmidt]
	\label{thm:KrullSchmidt}
	Let $V$ be a representation of\/ $\Sigma$ and let 
	\[ V = W_1 \oplus \dotso \oplus W_s \]
	be a decomposition of $V$ into indecomposable subrepresentations. Then this decomposition is unique up to isomorphisms: If it also holds that
	\[ V = W_1' \oplus \dotso \oplus W_{s'}' \]
	with indecomposable subrepresentations $W_1', \dotsc , W_{s'}'$ then $s=s'$ and $W_i \cong W_i'$ for all $i$ after renumbering the subrepresentations.
\end{theorem}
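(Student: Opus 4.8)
The plan is to prove the Krull--Schmidt theorem for finite-dimensional monoid representations by exploiting the fact that the endomorphism ring of an indecomposable representation is \emph{local}. The classical proof of Krull--Schmidt in abelian categories (or for modules of finite length) rests entirely on this property, and the finite-dimensionality hypothesis here is exactly what guarantees it. So the first step is to establish: if $W$ is an indecomposable finite-dimensional representation of $\Sigma$, then $\End{W}$ has no nontrivial idempotents, and consequently every endomorphism of $W$ is either an isomorphism or nilpotent, making $\End{W}$ a local ring (the non-units form a two-sided ideal). The argument is Fitting's lemma: for $\varphi \in \End{W}$, finite-dimensionality gives a stable decomposition $W = \gker{\varphi} \oplus \redim{\varphi}$ into the eventual kernel and eventual image, both of which are $\Sigma$-invariant; indecomposability forces one summand to be trivial, so $\varphi$ is either invertible or nilpotent.

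Second, I would prove the so-called \emph{exchange} step, which is the heart of the matter. Given the two decompositions $V = W_1 \oplus \dotso \oplus W_s = W_1' \oplus \dotso \oplus W_{s'}'$, let $\pi_i' \colon V \to W_i'$ and $\iota_i \colon W_i \to V$ be the projections and inclusions for the respective decompositions. Focusing on $W_1$, one has $\sum_{j} (\text{inclusion of } W_j') \circ \pi_j'|_{W_1} = \mathrm{id}_{W_1}$, and since $\End{W_1}$ is local, at least one of the composites must be an isomorphism $W_1 \to W_1' \to W_1$; relabeling, say the $j=1$ term is invertible. This forces $\pi_1'|_{W_1} \colon W_1 \to W_1'$ to be an isomorphism (both maps between indecomposables are then split injective/surjective by a dimension count, hence bijective), giving $W_1 \cong W_1'$. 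One then shows $V = W_1 \oplus W_2' \oplus \dotso \oplus W_{s'}'$, i.e.\ the summand $W_1$ may replace $W_1'$ in the second decomposition.

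Third, I would run an induction on $s$ (equivalently on $\dim V$). Having exchanged $W_1'$ for $W_1$, one passes to the quotient $V / W_1 \cong W_2 \oplus \dotso \oplus W_s \cong W_2' \oplus \dotso \oplus W_{s'}'$, which carries an induced $\Sigma$-representation of strictly smaller dimension whose summands are again indecomposable. The induction hypothesis then yields $s - 1 = s' - 1$, hence $s = s'$, together with a matching $W_i \cong W_i'$ for the remaining summands; the base case $s = 1$ is immediate from indecomposability of $V$ itself.

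The main obstacle is establishing locality of $\End{W}$ for indecomposable $W$ in the monoid setting, since the familiar group-representation tools (complete reducibility, Schur's lemma giving a division ring) are unavailable here: indecomposable need not mean irreducible, and $\End{W}$ need not be a division ring. The substitute is Fitting's lemma, and the one point requiring care is that $\gker{\varphi}$ and $\redim{\varphi}$ are genuinely $\Sigma$-invariant subspaces --- this follows because $\varphi$ commutes with every $A_\sigma$, so $A_\sigma$ preserves $\ker \varphi^n$ and $\im \varphi^n$ for every $n$. Once locality is in hand, the exchange and induction are formal and the proof closes without further difficulty.
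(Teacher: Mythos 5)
Your outline is correct and is the standard Krull--Schmidt argument (Fitting's lemma makes $\End{W}$ local for indecomposable $W$, then an exchange step plus induction on the number of summands); the paper omits the proof and defers to its reference, but the two ingredients you use are recorded in the paper verbatim as \autoref{prop:Rink44} and \autoref{prop:Rink42}, so your route is essentially the intended one. The only point to tighten is your parenthetical justification in the exchange step: if a composite $W_1 \to W_1' \to W_1$ is invertible, a dimension count alone does not show both factors are isomorphisms (injectivity of the first map and surjectivity of the second both yield $\dim W_1 \le \dim W_1'$), so you should instead invoke indecomposability of $W_1'$ via the idempotent $L(KL)^{-1}K$ --- or simply cite \autoref{prop:Rink42}, which is exactly this statement.
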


Consider a second representation $A' \colon \Sigma \to \mathfrak{gl}(V')$ on a finite dimensional real vector space $V'$ and a linear map
\[ L \colon V \to V'. \]
If $L$ commutes with the monoid action
\[ L \circ A_\sigma = A'_\sigma \circ L \quad \text{for all} \quad \sigma \in \Sigma \]
we call it a \emph{homomorphism of representations} and write $L \in \Hom{V}{V'}$. If $V = V'$, we call $L$ an \emph{endomorphism} and write $\End{V} = \Hom{V}{V}$. If $L$ is invertible, we call it an \emph{isomorphism} and $V$ and $V'$ are \emph{equivalent} or \emph{isomorphic}. The following remark points out why endomorphisms are especially interesting for the study of monoid equivariant dynamics.
\begin{rem}
	\label{rem:Rink43}
	Assume that $F \colon V \to V$ is a continuously differentiable vector field and $x_0\in V$ is a point such that
	\begin{enumerate}[(i)]
		\item $x_0$ is an equilibrium point of $F$, i.e., $F(x_0)=0$ ;
		\item $x_0$ is $\Sigma$-symmetric, i.e. $A_\sigma x_0 = x_0$ for all $\sigma \in \Sigma$;
		\item $F$ is $\Sigma$-equivariant, i.e. $F \circ A_\sigma  = A_\sigma \circ F $ for all $\sigma \in \Sigma$.
	\end{enumerate}
	Then differentiation of $F \left( A_\sigma x \right) = A_\sigma F \left( x \right)$ at $x=x_0 = A_\sigma x_0$ yields
	\[ L_0 \circ A_\sigma = A_\sigma \circ L_0 \]
	with $L_0 = D_x F \left( x_0 \right)$ and hence $L_0 \in \End{V}$.
\end{rem}


When we consider an indecomposable representation the space of endomorphisms has some interesting properties itself.
\begin{prop}
	\label{prop:Rink44}
	Let $V$ be an indecomposable representation and let $L \in \End{V}$. Then $L$ is either invertible or nilpotent (i.e., there exists $n \in \NN$ such that $L^n=0$).
\end{prop}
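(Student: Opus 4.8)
The plan is to exploit the fact that $V$ is indecomposable together with the fact that $\End{V}$ is a finite-dimensional associative real algebra. Given $L \in \End{V}$, I would first consider the descending chain of images and the ascending chain of kernels of the iterates $L^n$. Since $V$ is finite-dimensional, Fitting's lemma applies: the chains stabilize, so there exists $n \in \NN$ with $\im{L^n} = \im{L^{2n}}$ and $\ker L^n = \ker L^{2n}$, and one obtains the \emph{Fitting decomposition}
\[
V = \im{L^n} \oplus \ker L^n.
\]
The key point is that both summands are subrepresentations: because $L$ commutes with every $A_\sigma$, so does each power $L^n$, and hence $\im{L^n}$ and $\ker L^n$ are invariant under the monoid action.

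With this decomposition in hand, indecomposability does the work. Since $V$ is indecomposable and $V = \im{L^n} \oplus \ker L^n$ is a direct sum of two subrepresentations, one of the summands must be $\lbrace 0 \rbrace$ and the other must be all of $V$. If $\ker L^n = \lbrace 0 \rbrace$, then $L^n$ is injective, hence (by finite dimensionality) bijective, so $L^n$ is invertible; this forces $L$ itself to be invertible. If instead $\im{L^n} = \lbrace 0 \rbrace$, then $L^n = 0$, so $L$ is nilpotent. These two cases are mutually exclusive for $L \ne 0$, which yields exactly the dichotomy claimed.

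The only step that requires genuine care — and the one I expect to be the main obstacle — is verifying the Fitting decomposition itself, namely that stabilization of the kernel and image chains yields an \emph{internal direct sum} $V = \im{L^n} \oplus \ker L^n$. The argument is that $\im{L^n} \cap \ker L^n = \lbrace 0 \rbrace$ (if $x = L^n y$ and $L^n x = 0$, then $L^{2n} y = 0$, so $y \in \ker L^{2n} = \ker L^n$, whence $x = L^n y = 0$), and a dimension count using the rank–nullity theorem then gives equality of dimensions, so the sum is all of $V$. Everything else is essentially formal once one observes that each power $L^n$ lies in $\End{V}$, so that the Fitting summands are automatically $\Sigma$-invariant. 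I would therefore structure the proof as: (1) iterates of $L$ are endomorphisms; (2) Fitting decomposition; (3) invariance of summands; (4) conclude via indecomposability.
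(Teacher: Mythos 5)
Your proof is correct. The paper itself omits the proof of this proposition, identifying it as the Fitting lemma and citing the literature (Jacobson); your argument --- stabilize the kernel and image chains of $L$, verify the Fitting decomposition $V = \operatorname{Im}(L^n) \oplus \ker L^n$, observe that both summands are $\Sigma$-invariant because $L^n \in \End{V}$, and then invoke indecomposability to force one summand to vanish --- is exactly the standard proof the paper is alluding to, and every step checks out.
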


\noindent
This result is also known as the \emph{Fitting lemma} and can be found, for example, in \autocite{Jacobson.1980}. As a corollary we obtain that the set of nilpotent endomorphisms of an indecomposable monoid representation
\[ \Endnil{V} = \left\lbrace L \in \End{V} \mid L \text{ is nilpotent} \right\rbrace \]
is an ideal in $\End{V}$. Factoring out this ideal we obtain the following lemma




\begin{lemma}[Schur's Lemma]
	\label{lem:Schur}
	Let $V$ be an indecomposable representation. The quotient 
	\[ \Endquot{V} \]
	is a real division algebra.
\end{lemma}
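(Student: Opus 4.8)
The plan is to establish the two defining properties of a real division algebra for the quotient $\Endquot{V}$ directly from the Fitting lemma (\autoref{prop:Rink44}) together with the fact, noted just above the statement, that $\Endnil{V}$ is a two-sided ideal in $\End{V}$. Recall that a real division algebra is an associative unital $\RR$-algebra in which every nonzero element has a multiplicative inverse; commutativity is \emph{not} required (the quaternions are the model case to keep in mind, and indeed such noncommutative quotients do occur). Accordingly the proof splits into first exhibiting the algebra structure on the quotient and then producing multiplicative inverses.

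First I would observe that $\End{V}$ is itself an associative unital $\RR$-algebra: it is closed under composition, since the composite of two maps that each commute with every $A_\sigma$ again commutes with every $A_\sigma$; it is an $\RR$-linear subspace of $\mathfrak{gl}(V)$; and it contains the identity $\mathbb{1}_V$. Because $\Endnil{V}$ is a two-sided ideal, the quotient $\Endquot{V}$ inherits the structure of an associative unital $\RR$-algebra, with unit the class $\left[ \mathbb{1}_V \right]$. Assuming $V \ne \lbrace 0 \rbrace$, this unit is nonzero: $\mathbb{1}_V$ is invertible and hence, by \autoref{prop:Rink44}, not nilpotent, so $\mathbb{1}_V \notin \Endnil{V}$.

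It then remains to invert an arbitrary nonzero class. Let $\left[ L \right] \ne 0$, that is, $L \in \End{V}$ with $L \notin \Endnil{V}$. By \autoref{prop:Rink44} every endomorphism of the indecomposable representation $V$ is either nilpotent or invertible, so $L$, not being nilpotent, must be invertible. The key check is that $L^{-1}$ is again an endomorphism: conjugating the relation $L A_\sigma = A_\sigma L$ by $L^{-1}$ yields $A_\sigma L^{-1} = L^{-1} A_\sigma$, whence $L^{-1} \in \End{V}$. Passing to the quotient gives $\left[ L \right] \left[ L^{-1} \right] = \left[ L L^{-1} \right] = \left[ \mathbb{1}_V \right]$ and symmetrically $\left[ L^{-1} \right] \left[ L \right] = \left[ \mathbb{1}_V \right]$, so $\left[ L^{-1} \right]$ is a two-sided inverse of $\left[ L \right]$. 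Hence every nonzero element is invertible, and the quotient is a real division algebra.

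The argument is short because the substantive work is carried by \autoref{prop:Rink44}; the points that require care are genuinely elementary. The main thing to verify carefully is that inverses descend correctly to the quotient — namely that the inverse of an endomorphism is again an endomorphism (so that inverses actually live in $\End{V}$, and thus in $\Endquot{V}$), that the unit $\left[ \mathbb{1}_V \right]$ does not collapse to zero (handled by excluding the trivial representation $V = \lbrace 0 \rbrace$), and that ``division algebra'' is understood in the possibly noncommutative sense, so that no commutativity need be — nor could be — established.
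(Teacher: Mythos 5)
Your proof is correct and takes exactly the route the paper intends: the text states the Fitting lemma (\autoref{prop:Rink44}), observes that $\Endnil{V}$ is a two-sided ideal, and then asserts the lemma by ``factoring out this ideal,'' deferring the details to Rink--Sanders. Your argument supplies precisely those details -- including the worthwhile checks that $L^{-1}$ is again equivariant and that the unit does not become zero -- so nothing further is needed.
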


\begin{rem}
	This result, stemming from module theory, is a significant generalization of the original lemma of Schur on irreducible modules -- which is important in the representation theory of groups -- to indecomposable ones.
\end{rem}
\noindent
Recall that any finite-dimensional real associative division algebra is isomorphic to either $\RR, \CC$, or $\HH$. In the first case we say $V$ is a representation of \emph{real type} or an \emph{absolutely indecomposable} representation. In the other two cases it is called a representation of \emph{complex} or, respectively, of \emph{quaternionic type}. Furthermore, we define the \emph{index} of $V$ to be the dimension of the division algebra:
\[ \Ind{V} = \ind{V}. \]

The next result is an immediate consequence of the Fitting lemma. It investigates concatenations of homomorphisms of indecomposable representations. Even though it appears to be out of context here, it turns out to be useful later in the text.
\begin{prop}
	\label{prop:Rink42}
	Let $V$ and $V'$ be indecomposable representations, and consider two homomorphisms $L\in \Hom{V}{V'}$ and $K \in \Hom{V'}{V}$. If $K\circ L$ is invertible, both $L$ and $K$ are isomorphisms.
\end{prop}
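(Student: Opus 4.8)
The plan is to deduce this directly from the Fitting lemma (\autoref{prop:Rink44}) applied to the endomorphism $K \circ L \in \End{V}$. First I would observe that since $L \in \Hom{V}{V'}$ and $K \in \Hom{V'}{V}$, the composition $K \circ L$ is an endomorphism of the indecomposable representation $V$. By hypothesis $K \circ L$ is invertible, so in particular it is not nilpotent; the Fitting lemma then only confirms invertibility, which we already know, so the real content must come from using this invertibility to split off isomorphisms.

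\medskip

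\noindent
The key step is a standard linear-algebra splitting argument, upgraded to the equivariant setting. Set $M = (K \circ L)^{-1} \in \End{V}$, which exists and commutes with the monoid action since $\End{V}$ is closed under inverses of its invertible elements. Then $(M \circ K) \circ L = \mathbb{1}_V$, so $L$ admits a left inverse $M \circ K \in \Hom{V'}{V}$ in the category of representations. This immediately shows $L$ is injective. Dually, $K \circ (L \circ M) = \mathbb{1}_V$ exhibits a right inverse for $K$, so $K$ is surjective. It remains to promote ``injective with equivariant left inverse'' and ``surjective with equivariant right inverse'' to full isomorphisms, and this is exactly where indecomposability of $V'$ enters.

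\medskip

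\noindent
To finish, I would consider the equivariant projection $P = L \circ M \circ K \in \End{V'}$ and check that it is idempotent: $P^2 = L \circ M \circ (K \circ L) \circ M \circ K = L \circ M \circ K = P$, using $K \circ L = M^{-1}$. Thus $P$ is an idempotent endomorphism of $V'$, giving an equivariant decomposition $V' = \im{P} \oplus \ker P$ into subrepresentations. Since $V'$ is indecomposable, either $P = 0$ or $P = \mathbb{1}_{V'}$. The case $P = 0$ is excluded because $K \circ P \circ L = (K \circ L)^2 = M^{-2}$ is invertible and hence nonzero, forcing $P \ne 0$. Therefore $P = \mathbb{1}_{V'}$, which means $L \circ M \circ K = \mathbb{1}_{V'}$, so $L$ also has an equivariant right inverse and $K$ also has an equivariant left inverse. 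Combined with the one-sided inverses already found, both $L$ and $K$ are two-sided invertible in $\Hom{}{}$, hence isomorphisms of representations.

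\medskip

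\noindent
The main obstacle, and the only place the full strength of the hypotheses is needed, is verifying that the idempotent $P$ genuinely yields a decomposition by \emph{subrepresentations} (not merely subspaces) so that indecomposability of $V'$ applies — this rests on $P$ being equivariant, which follows from $L$, $M$, $K$ all lying in the respective $\Hom$-spaces. An alternative, slicker route avoids $V'$ entirely: apply the Fitting lemma to $L \circ K \in \End{V'}$ instead and note that $(L \circ K)$ cannot be nilpotent since $(L \circ K)^{n+1} = L \circ (K \circ L)^n \circ K$ would vanish only if $(K \circ L)^n = 0$, contradicting invertibility; hence $L \circ K$ is invertible too, and from invertibility of both $K \circ L$ and $L \circ K$ one reads off that $L$ and $K$ are isomorphisms directly. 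I expect this second approach to be the cleaner one to write up.
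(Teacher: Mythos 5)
Your proposal is correct --- in fact you give two correct proofs. The paper itself writes none: it presents the proposition as ``an immediate consequence of the Fitting lemma'' and defers to the reference for details, and your closing ``slicker route'' is exactly that intended argument. Spelled out: $L\circ K\in\End{V'}$ cannot be nilpotent, since $(L\circ K)^{n+1}=0$ would give, after composing with $K$ on the left and $L$ on the right, $(K\circ L)^{n+2}=0$, contradicting invertibility of $K\circ L$ (this is a cleaner justification than the one you sketch, which as written needs the injectivity of $L$ and surjectivity of $K$ you established earlier); \autoref{prop:Rink44} then makes $L\circ K$ invertible, and invertibility of $K\circ L$ forces $L$ injective and $K$ surjective while invertibility of $L\circ K$ forces $K$ injective and $L$ surjective, so both are bijective equivariant maps and hence isomorphisms. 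Your main argument via the idempotent $P=L\circ M\circ K$ is a genuinely different route: it bypasses the Fitting lemma altogether, using only the elementary fact that an indecomposable representation admits no equivariant idempotents other than $0$ and the identity; notably, both of your routes use indecomposability of $V'$ only, never that of $V$. Two cosmetic slips, neither affecting the conclusion: first, $K\circ P\circ L=(K\circ L)\,M\,(K\circ L)=M^{-1}$, not $(K\circ L)^2=M^{-2}$ (you dropped the middle factor $M$), though it is invertible and hence nonzero either way; second, the non-nilpotence justification already mentioned. Both write-ups would be acceptable; the Fitting-lemma version is the shorter one and the one the paper has in mind.
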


The final result of this section discusses perturbations of endomorphisms. Any $L \in \End{V}$ imposes a decomposition of $V$ into the so-called \emph{generalized kernel} and \emph{reduced image} of $L$
\[ V = \gker{L} \oplus \redim{L}. \]
These are kernel and image of $L^n$ when $n = \dim V$ or equivalently the generalized eigenspace of the eigenvalue $0$ and the direct sum of the generalized eigenspaces of all nonzero eigenvalues. Furthermore, both are subrepresentations that are invariant under $L$. Note that this decomposition is the same used in \autocite{Rink.2014} even though it is defined differently.

\begin{lemma}
	\label{lem:Rink63}
	Let $L_0 \in \End{V}$ and denote the decomposition into generalized kernel and reduced image of $L_0$ by
	\[ V = \gker{L_0} \oplus \redim{L_0} \]
	with respect to which 
	\[ L_0 = \begin{pmatrix}
	L_0^{11}	& 0 \\
	0			& L_0^{22}
	\end{pmatrix} \]
	with $L_0^{11}$ nilpotent and $L_0^{22}$ invertible. Then there is an open neighborhood $U \subset \End{V}$ of the zero endomorphism $0 \in \End{V}$ and smooth maps 
	\[ \phi^{11} \colon U \to \End{\gker{L_0}} \quad \text{and} \quad \phi^{22} \colon U \to \End{\redim{L_0}}\]
	so that for every $L \in U$ 
	\[ L_0 + L \quad \text{is conjugate to} \quad 
	\begin{pmatrix}
	\phi^{11}(L)	& 0 \\
	0				& \phi^{22}(L)
	\end{pmatrix}. \]
	It holds that $\phi^{11}(L) = L_0^{11} + L^{11} + \mathcal{O}( \| L \| ^2)$ and $\phi^{22}(L) = L_0^{22} + L^{22} + \mathcal{O}( \| L \| ^2)$.
\end{lemma}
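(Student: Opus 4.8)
The plan is to build, for each small perturbation $L$, an equivariant change of coordinates that carries the $(L_0+L)$-invariant spectral splitting back onto the fixed splitting $V=\gker{L_0}\oplus\redim{L_0}$; conjugation by it then block-diagonalizes $L_0+L$, and reading off the diagonal blocks defines $\phi^{11}$ and $\phi^{22}$. Since $L_0^{11}$ is nilpotent and $L_0^{22}$ is invertible, the spectrum of $L_0$ splits into $\{0\}$ and a set bounded away from $0$. First I would fix a small circle $\gamma\subset\CC$ around $0$ separating these two parts and, after complexifying, introduce the Riesz spectral projection
\[ P(L) = \frac{1}{2\pi i}\oint_\gamma \bigl(z\,\mathbb{1} - (L_0+L)\bigr)^{-1}\,dz \]
for all $L$ in a neighborhood $U$ of $0$ small enough that no eigenvalue of $L_0+L$ crosses $\gamma$. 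This $P(L)$ depends analytically on $L$, and since the resolvent of $L_0+L\in\End{V}$ commutes with every $A_\sigma$, so does $P(L)$; choosing $\gamma$ symmetric under complex conjugation makes $P(L)$ real, so it is an equivariant projection on $V$. Its image and kernel are the subrepresentations on which $L_0+L$ has spectrum near $0$ and near the nonzero part, respectively, both invariant under $L_0+L$, and $P(0)$ is exactly the projection onto $\gker{L_0}$ along $\redim{L_0}$.

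Next I would intertwine $P(L)$ with $P(0)$ by Kato's transformation formula
\[ g(L) = P(L)\,P(0) + (\mathbb{1}-P(L))(\mathbb{1}-P(0)). \]
A direct computation gives $g(L)\,P(0) = P(L)\,g(L)$, and for the companion map $g'(L) = P(0)\,P(L) + (\mathbb{1}-P(0))(\mathbb{1}-P(L))$ one finds $g'(L)\,g(L) = \mathbb{1} - (P(L)-P(0))^2$. Hence, after shrinking $U$ so that $\|P(L)-P(0)\|<1$, the map $g(L)$ is invertible with $g(L)\,P(0)\,g(L)^{-1} = P(L)$. Because $P(L)$ and $P(0)$ are equivariant, $g(L)\in\End{V}$ is an equivariant isomorphism, and $g(L)^{-1}$ carries the spectral splitting of $L_0+L$ onto $\gker{L_0}\oplus\redim{L_0}$. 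Therefore $M(L) := g(L)^{-1}(L_0+L)\,g(L)$ is an equivariant endomorphism preserving both summands, and I define $\phi^{11}(L)$ and $\phi^{22}(L)$ to be its restrictions to $\gker{L_0}$ and $\redim{L_0}$; these lie in $\End{\gker{L_0}}$ and $\End{\redim{L_0}}$ and are smooth in $L$ since $P(L)$ is.

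Finally, for the first-order expansion I would use that $g(0)=\mathbb{1}$, so $g(L)=\mathbb{1}+G[L]+\OO(\|L\|^2)$ with $G$ linear, and expand
\[ M(L) = (L_0+L) + [\,L_0,\,G[L]\,] + \OO(\|L\|^2). \]
The one computational input is that the derivative of a projection is purely off-diagonal: differentiating $P(L)^2=P(L)$ at $L=0$ yields $P(0)\,P'(0)[L]\,P(0)=0$ together with the analogous identity for $\mathbb{1}-P(0)$, so $G[L]=P'(0)[L]\,(2P(0)-\mathbb{1})$ has vanishing diagonal blocks with respect to the splitting. Since $L_0$ is block-diagonal, the commutator $[\,L_0,\,G[L]\,]$ is then off-diagonal as well and contributes nothing to the diagonal blocks at first order. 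Reading off the $(1,1)$- and $(2,2)$-blocks of $M(L)$ gives $\phi^{11}(L)=L_0^{11}+L^{11}+\OO(\|L\|^2)$ and $\phi^{22}(L)=L_0^{22}+L^{22}+\OO(\|L\|^2)$. I expect the main obstacle to be the bookkeeping that guarantees equivariance and reality of the spectral projection and, above all, the off-diagonal identity $P(0)\,P'(0)[L]\,P(0)=0$, which is precisely what forces the first-order correction to leave the diagonal blocks untouched.
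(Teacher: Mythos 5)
Your argument is correct and complete. Bear in mind that the paper itself gives no proof of this lemma: Section~\ref{sec:rm} explicitly omits all proofs and defers to \cite{Rink.2014}, where the corresponding statement is established by a more elementary route --- one posits a conjugator of the form $\mathbbm{1}$ plus a block--off-diagonal perturbation and solves for its two off-diagonal blocks via the implicit function theorem so that the conjugated operator becomes block diagonal; the linearization at $L=0$ is a pair of Sylvester operators $g^{12}\mapsto L_0^{11}g^{12}-g^{12}L_0^{22}$ and $g^{21}\mapsto L_0^{22}g^{21}-g^{21}L_0^{11}$, invertible precisely because $L_0^{11}$ is nilpotent and $L_0^{22}$ is invertible. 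Your route through the Riesz projection $P(L)$ and Kato's transformation function $g(L)$ is a genuinely different but equally valid construction: it buys analyticity of $\phi^{11},\phi^{22}$ for free and makes equivariance nearly automatic (the resolvent of an equivariant operator commutes with every $A_\sigma$, and this survives the contour integral), at the price of a complexification and the reality check on $P(L)$. The identities you invoke all hold: $g(L)P(0)=P(L)g(L)$, $g'(L)g(L)=\mathbbm{1}-(P(L)-P(0))^2$, $g(0)=\mathbbm{1}$, and the off-diagonality of $P'(0)[L]$ and hence of $G[L]=P'(0)[L](2P(0)-\mathbbm{1})$, which is exactly what prevents the conjugation from contaminating the diagonal blocks at first order and yields the stated expansions. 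Two small points deserve a sentence in a final write-up: since the $A_\sigma$ need not be invertible for a monoid, note that commutation with $z\mathbbm{1}-(L_0+L)$ passes to the inverse without inverting $A_\sigma$ (it does); and the reason $M(L)=g(L)^{-1}(L_0+L)g(L)$ preserves both summands is that $P(L)$ commutes with $L_0+L$, so $M(L)$ commutes with $P(0)$, which is what places $\phi^{11}(L)$ and $\phi^{22}(L)$ in $\End{\gker{L_0}}$ and $\End{\redim{L_0}}$ as the lemma requires.
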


\section{Generic steady state bifurcations in real monoid representations}
\label{sec:gb}

In this section we aim to prove the main theorem. The principal part of the proof is a generalization of the proof in \autocite{Rink.2014}. Therefore, the structures of the proofs are similar and some of the notation is used again. The major difference is that we have to take special care of monoid representations containing multiple direct summands that are equivalent subrepresentations.

In order to prove the claim, let $\Sigma$ be a monoid which is represented on the finite-dimensional real vector space $V$ as before:
\[ \Sigma \to \mathfrak{gl}(V), \quad \sigma \mapsto A_\sigma . \]
For the rest of this text all representations are real and finite-dimensional. Furthermore, let
\[ F \colon V \times \RR \to V \]
be a smooth equivariant vector field that depends on a real parameter:
\[ F \left( A_\sigma x , \lambda \right) = A_\sigma F(x,\lambda) \quad \text{for all} \quad x \in V, \lambda \in \RR \text{ and } \sigma \in \Sigma. \]
We assume that for all $\lambda$ the vector field possesses a $\Sigma$-symmetric equilibrium $x_0$. The implicit function theorem implies that for varying parameter values, solution branches can only emerge from $x_0$ at a parameter value $\lambda_0$ when the linearization $L_{\lambda_0} = \mathrm{D}_x F(x_0, \lambda_0)$ is not invertible. Without loss of generality, we may assume $\lambda_0 = 0$. Furthermore, Lyapunov-Schmidt reduction tells us that possible new solution branches then locally occur along the generalized kernel (the generalized eigenspace to the eigenvalue $0$) of $L_0$. As we have seen in a remark in \hyperref[sec:rm]{Section \ref{sec:rm}}, the linearization is equivariant as well. Hence $L_\lambda = \mathrm{D}_x F(x_0, \lambda)$ is a one parameter family of endomorphisms of $V$. \autocite{Rink.2014} prove that the Lyapunov-Schmidt reduction can be performed to preserve equivariance. Hence, the bifurcation equation reduces to an equivariant equation on the generalized kernel. Thus, in order to prove the main theorem we have to investigate generalized kernels of one-parameter families of endomorphisms of finite-dimensional representations of monoids.

\subsection{Isotypic components and nilpotent endomorphisms}
\label{subsec:ic}
As a first step we decompose the representation $V$ into indecomposable components
\[ V = W_1 \oplus \dotso \oplus W_m \]
where each $W_i$ is an indecomposable subrepresentation of $V$. This decomposition is unique due to the Krull-Schmidt theorem (\ref{thm:KrullSchmidt}). It yields a partition of $\lbrace1, \dotsc , m\rbrace = P_1 \cup \dotso \cup P_s$ such that $W_i \cong W_j$ if $i$ and $j$ are in the same $P_k$ and $W_i \not\cong W_j$ if $i$ and $j$ are not in the same $P_k$. Summing up the components according to that partition
\[ V_k = \bigoplus_{i \in P_k} W_i, \]
i.e., summing up those components that are isomorphic or equivalent, we obtain a coarser decomposition
\[ V = V_1 \oplus \dotso \oplus V_s. \]
We call the $V_k$ \emph{isotypic components}. As it stems directly from the decomposition into indecomposable subrepresentations, this decomposition is unique up to equivalence of subrepresentations as well. As a matter of fact, we may identify each isotypic component with the finite direct sum of one of its indecomposable subrepresentations
\[ V_i \cong W_j^{s_i} \]
for some $s_i \in \NN$ and suitable $j\in \lbrace 1, \dotsc , m \rbrace$. We will prove some preparatory results on isotypic components first.

\begin{lemma}
	\label{lem:homisotypic}
	Let $X$ and $Y$ be indecomposable $\Sigma$-representations and let $V=X^s$ and $W=Y^r$ for some $r,s \in \NN$ be representations consisting of precisely one isotypic component. For $L \in \Hom{V}{W}$ and $K \in \Hom{W}{V}$ it holds that $KL \in \End{V}$, and we may represent it as a block matrix with respect to the decomposition:
	\[ KL = \quadmat{B}{s} \]
	so that $B^{ij} \in \End{X}$ for all $i,j$. Suppose that $X$ and $Y$ are nonisomorphic representations, i.e., $X \ncong Y$. Then all the $B^{ij}$ are nilpotent.
\end{lemma}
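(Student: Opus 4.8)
The plan is to expand the product $KL$ into blocks and to treat each block as a sum of endomorphisms of the single indecomposable representation $X$, controlled by the Fitting lemma together with the ideal structure of the nilpotent endomorphisms. First I would fix block decompositions compatible with $V = X^s$ and $W = Y^r$: write $L = (L_{ab})$ as an $r \times s$ block matrix with entries $L_{ab} \in \Hom{X}{Y}$, and $K = (K_{ba})$ as an $s \times r$ block matrix with entries $K_{ba} \in \Hom{Y}{X}$. Multiplying these block matrices then yields $B^{ij} = \sum_{a=1}^{r} K_{ia} L_{aj}$, where each summand $K_{ia} L_{aj}$ is a genuine endomorphism of the indecomposable representation $X$, i.e., an element of $\End{X}$.

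The key observation is that each individual summand is nilpotent. Indeed, $L_{aj} \in \Hom{X}{Y}$ and $K_{ia} \in \Hom{Y}{X}$, so if $K_{ia} L_{aj}$ were invertible, then \autoref{prop:Rink42} would force both $L_{aj}$ and $K_{ia}$ to be isomorphisms, which would give $X \cong Y$ and contradict the hypothesis $X \ncong Y$. Hence no summand $K_{ia} L_{aj}$ is invertible, and by the Fitting lemma (\autoref{prop:Rink44}) an endomorphism of an indecomposable representation that fails to be invertible is necessarily nilpotent. Therefore every $K_{ia} L_{aj}$ lies in $\Endnil{X}$.

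It remains to pass from nilpotency of the summands to nilpotency of the block $B^{ij}$ itself. This is the one step that requires genuine care, since a sum of nilpotent endomorphisms need not be nilpotent in an arbitrary algebra, and this is exactly where I expect the main obstacle to lie. Here, however, I would invoke the fact recorded right after \autoref{prop:Rink44} that $\Endnil{X}$ is an ideal of $\End{X}$, hence in particular an additive subgroup (indeed a linear subspace). Since each $K_{ia} L_{aj}$ belongs to $\Endnil{X}$, so does their sum $B^{ij} = \sum_{a=1}^{r} K_{ia} L_{aj}$, which is therefore nilpotent. As $i$ and $j$ were arbitrary, all the blocks $B^{ij}$ are nilpotent, completing the argument.

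In summary, the structural feature that makes the proof work is precisely the ideal property of $\Endnil{X}$: it upgrades the pointwise nilpotency coming from $X \ncong Y$ (via \autoref{prop:Rink42} and \autoref{prop:Rink44}) to nilpotency of the entire block. Without this closure-under-addition property the final step would fail, so I would be careful to cite it explicitly rather than treat the sum of nilpotents as automatically nilpotent.
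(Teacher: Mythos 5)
Your proposal is correct and follows essentially the same route as the paper's own proof: block-decompose $KL$, show each summand $K^{il}L^{lj}$ is nilpotent via \autoref{prop:Rink42} and the Fitting lemma (\autoref{prop:Rink44}), and then use the ideal property of $\Endnil{X}$ to conclude the finite sum is nilpotent. Your explicit emphasis on why closure under addition is the crucial (and non-automatic) step matches the paper's reasoning exactly.
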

\begin{proof}
	As a first step we present $K$ and $L$ in block matrix form respecting the decompositions of $V$ and $W$, respectively:
	\[ L = \begin{pmatrix}
	L^{11}	& \cdots	& L^{1s} \\
	\vdots	&			& \vdots \\
	L^{r1}	& \cdots	& L^{rs}
	\end{pmatrix} \quad \text{and} \quad K = \begin{pmatrix}
	K^{11}	& \cdots	& K^{1r} \\
	\vdots	&			& \vdots \\
	K^{s1}	& \cdots	& K^{sr}
	\end{pmatrix} \]
	where $L^{ij} \in \Hom{X}{Y}$ and $K^{ij} \in \Hom{Y}{X}$ for all $i,j$. Therefore, the product $KL$ is an $s \times s$ block matrix with entries
	\[ (KL)^{ij} = \sum_{l=1}^{r} K^{il}L^{lj}, \]
	which are in $\End{X}$. In particular, this holds for each of the summands $K^{il}L^{lj} \in \End{X}$ for all $i,j,l$. \autoref{prop:Rink44} yields that such products are either invertible or nilpotent. Suppose now that $K^{il}L^{lj}$ is invertible for some $i,j,l$. Then \autoref{prop:Rink42} yields that both $K^{il}$ and $L^{lj}$ are isomorphisms and hence $X \cong Y$. This is a contradiction to our assumptions, and therefore $K^{il}L^{lj}$ are nilpotent for all $i,j,l$. The fact that $\Endnil{X}$ is an ideal (see \autoref{prop:Rink44}) provides the same result for finite sums of these elements and hence for all blockwise entries of the product $KL$ which completes the proof.
\end{proof}
\begin{rem}
	The previous lemma is a generalization of \autoref{prop:Rink42} on isotypic components.
\end{rem}

Next, we aim at understanding nilpotent endomorphisms of isotypic components in a similar fashion as for indecomposable representations. In order to do so we consider a real finite-dimensional representation consisting of precisely one isotypic component
\[ V = X^s \]
unless stated differently. Presenting the endomorphisms of $V$ in block matrix form as before we may identify 
\[ \End{V} \cong \M{s}{\End{X}}, \]
where $\M{s}{\End{X}}$ is the algebra of $s \times s$ matrices with entries in $\End{X}$. The first step in understanding the nilpotent endomorphisms $\Endnil{V}$ is to see that the collection $\Endnilnull{V}$ of matrices with blockwise nilpotent entries
\[ \Endnilnull{V} = \M{s}{\Endnil{X}} \]
is an ideal in $\End{V}$. This follows immediately from the fact that $\Endnil{X}$ is an ideal in $\End{X}$ (see \autoref{prop:Rink44}) and from the rules of matrix summation and multiplication.
\begin{lemma}
	\label{lem:ideal}
	The collection
	\[ \Endnilnull{V} \subset \End{V} \]
	is an ideal.
\end{lemma}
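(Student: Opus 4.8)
The plan is to verify the two defining properties of a two-sided ideal directly on the block-matrix description $\End{V} \cong \M{s}{\End{X}}$ that has just been set up, reducing everything to the already-established fact that $\Endnil{X}$ is an ideal in $\End{X}$ (the corollary to \autoref{prop:Rink44}).

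First I would record that $\Endnilnull{V} = \M{s}{\Endnil{X}}$ is an additive subgroup, in fact a linear subspace, of $\End{V}$. This is immediate entry by entry: the $(i,j)$ block of a sum or a scalar multiple of matrices with entries in $\Endnil{X}$ is again a sum or scalar multiple of elements of $\Endnil{X}$, and $\Endnil{X}$ is closed under both operations since it is an ideal.

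The core of the argument is the absorption property. Given an arbitrary $A = (A^{ij}) \in \End{V} \cong \M{s}{\End{X}}$ and any $N = (N^{ij}) \in \Endnilnull{V} = \M{s}{\Endnil{X}}$, the rule for block-matrix multiplication gives $(AN)^{ij} = \sum_{l=1}^s A^{il} N^{lj}$. Each summand is a product of some $A^{il} \in \End{X}$ with $N^{lj} \in \Endnil{X}$, hence lies in $\Endnil{X}$ because this set is an ideal; and since $\Endnil{X}$ is closed under addition the full sum lies in $\Endnil{X}$ as well. Thus $(AN)^{ij} \in \Endnil{X}$ for all $i,j$, so $AN \in \Endnilnull{V}$. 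The computation for $NA$ is entirely symmetric, now using that $\Endnil{X}$ absorbs products on the other side. This establishes closure under left and right multiplication by arbitrary endomorphisms, which is precisely the two-sided ideal property.

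I do not expect any genuine obstacle: the statement is a purely formal consequence of the Fitting lemma (\autoref{prop:Rink44}) combined with the rules of matrix arithmetic, exactly as the sentence preceding the lemma already indicates. The only point that deserves a word is the identification $\End{V} \cong \M{s}{\End{X}}$, which rests on $\Sigma$ acting diagonally on $V = X^s$, so that each block $L^{ij}$ of an equivariant map is itself $\Sigma$-equivariant and thus lies in $\End{X}$; this has already been used when writing endomorphisms in block form.
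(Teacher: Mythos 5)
Your proposal is correct and follows essentially the same route as the paper: both verify the subgroup/subspace property entrywise and then establish two-sided absorption via the block-multiplication formula $(LK)^{ij}=\sum_l L^{il}K^{lj}$, reducing everything to the fact that $\Endnil{X}$ is an ideal in $\End{X}$. Your extra remark justifying the identification $\End{V}\cong\M{s}{\End{X}}$ is a harmless addition the paper takes for granted.
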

\begin{proof}
	Let
	\[ L = \quadmat{L}{s}, K= \quadmat{K}{s} \in \M{s}{\End{X}}. \]
	We may easily check the requirements for an ideal using the fact that $\Endnil{X}$ is an ideal.
	\begin{enumerate}[(i)]
		\item Obviously $0 \in \Endnilnull{V}$.
		\item Let $L,K \in \Endnilnull{V}$. Then $L^{ij},K^{ij} \in \Endnil{X}$ for all $i,j$, and therefore $-L^{ij},-K^{ij} \in \Endnil{X}$ as well as $L^{ij}-K^{ij} \in \Endnil{X}$ for all $i,j$. Thus $L-K \in \Endnilnull{V}$.
		\item Let $L \in \Endnilnull{V}$. Then
		\[ (LK)^{ij}= \sum_{l=1}^s L^{il}K^{lj} \]
		with each summand being an element in $\Endnil{X}$ as it is an ideal. Thus the same holds for the finite sum and $LK \in \Endnilnull{V}$. Exchanging the role of $L$ and $K$ yields the same for $KL$. \qedhere
	\end{enumerate}
\end{proof}
\begin{rem}
	Note that all elements in $\Endnilnull{V}$ are nilpotent themselves. This can be seen using the fact that after a choice of a basis for $V$ endomorphisms can be represented as real matrices with
	\[ \tr{L^k} = 0 \]
	for $L \in \Endnilnull{V}$ and all $k \in \NN$.
\end{rem}
As $\Endnilnull{V}$ is an ideal in $\End{V}$ or more precisely in $\M{s}{\End{X}}$, we may consider the factor ring/factor space
\[ \M{s}{\End{X}} / \M{s}{\Endnil{X}}. \]
This yields the decomposition
\begin{equation}
\label{eq:decomposition}
\M{s}{\End{X}} = \Endnilnull{V} \oplus W.
\end{equation}
where
\[ W \cong \M{s}{\End{X}} / \M{s}{\Endnil{X}}. \]
The isomorphism is exactly the projection map 
\[ \pi \colon \M{s}{\End{X}} \to \M{s}{\End{X}} / \M{s}{\Endnil{X}} \]
restricted to $W$. Note that factoring out $\Endnilnull{V}=\M{s}{\Endnil{X}}$ is the same as factoring out $\Endnil{X}$ entrywise. Therefore,
\[ \M{s}{\End{X}} / \M{s}{\Endnil{X}} = \M{s}{\Endquot{X}}. \]
The spaces are not only isomorphic but equal.
Remember from the considerations after Schur's lemma (\ref{lem:Schur}) that we may furthermore identify
\[ \Endquot{X} \cong \KK \]
where $\KK = \RR, \CC$, or $\HH$ depending on the representation type of $X$. Therefore, we may identify
\[ \M{s}{\Endquot{X}} \cong \M{s}{\KK} \]
using an isomorphism $\kappa$. It is important to bear in mind that, even though we identify the factor space with complex or even quaternionic matrices, we still treat it as a real algebra, meaning that we allow scalar multiplication by real numbers only.
\begin{lemma}
	\label{lem:nilpotentprojection}
	Let $L \in \End{V}$. Then $L$ is nilpotent if and only if $\pi(L)$ is nilpotent in $\M{s}{\Endquot{X}}$ and $\kappa \pi(L)$ is nilpotent in $\M{s}{\KK}$.
\end{lemma}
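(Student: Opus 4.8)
The plan is to reduce the stated three-way equivalence to a single statement and then exploit that $\pi$ is an algebra homomorphism with kernel $\Endnilnull{V}$. First I would observe that the two conditions on the right-hand side are equivalent to one another: since $\kappa$ is an isomorphism of real algebras and nilpotency is a purely multiplicative notion (the existence of some $n \in \NN$ with $N^n = 0$), the element $\pi(L)$ is nilpotent in $\M{s}{\Endquot{X}}$ if and only if its image $\kappa\pi(L)$ is nilpotent in $\M{s}{\KK}$. Hence it suffices to prove that $L$ is nilpotent in $\End{V}$ if and only if $\pi(L)$ is nilpotent in $\M{s}{\Endquot{X}}$.

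For the forward implication I would use that $\pi$, being the quotient map by the ideal $\Endnilnull{V}$, is a homomorphism of real algebras, so that $\pi(L)^k = \pi(L^k)$ for every $k$. If $L^n = 0$ for some $n$, then $\pi(L)^n = \pi(L^n) = \pi(0) = 0$, and $\pi(L)$ is nilpotent. This direction is immediate and requires nothing beyond functoriality of $\pi$.

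The converse is where the actual content sits, and I expect it to be the main (though still modest) obstacle. Suppose $\pi(L)$ is nilpotent, say $\pi(L)^n = 0$. Then $\pi(L^n) = \pi(L)^n = 0$, so $L^n$ lies in $\ker \pi = \Endnilnull{V} = \M{s}{\Endnil{X}}$. At this point I would invoke the observation recorded in the remark following \autoref{lem:ideal}: every element of $\Endnilnull{V}$ is itself genuinely nilpotent (seen, for instance, from $\tr{M^k} = 0$ for all $k$ after choosing a real basis of $V$). Applying this to $L^n$ yields $(L^n)^m = 0$ for some $m$, hence $L^{nm} = 0$ and $L$ is nilpotent. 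The only place where care is needed is the distinction between $L^n$ merely lying in the ideal of blockwise-nilpotent matrices and $L^n$ being an honest nilpotent endomorphism of $V$; the cited remark bridges exactly this gap, so no further argument is required.
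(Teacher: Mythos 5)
Your proposal is correct and follows essentially the same route as the paper's own proof: the forward direction via $\pi$ and $\kappa$ being ring homomorphisms, and the converse by observing that $\pi(L)^k=0$ forces $L^k\in\Endnilnull{V}$ and then invoking the remark after \autoref{lem:ideal} that elements of $\Endnilnull{V}$ are themselves nilpotent. Your explicit remark that the two right-hand conditions are equivalent because $\kappa$ is an algebra isomorphism is a small clarification the paper leaves implicit, but the argument is otherwise identical.
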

\begin{proof}
	The first direction of the proof follows directly from the fact that $\pi$ and $\kappa$ are homomorphisms of rings. Conversely, let $L \in \End{V}$ such that $\pi(L)$ is nilpotent in $\M{s}{\Endquot{X}}$ (or equivalently $\kappa\pi(L)$ nilpotent in $\M{s}{\KK}$). Then there exists $k \in \NN$ such that $\pi(L)^k = 0 \in \M{s}{\Endquot{X}}$. This is the same as $\pi(L^k) = 0 \in \M{s}{\Endquot{X}}$ or equivalently $L^k \in \Endnilnull{V}$. As mentioned in the last remark, the elements of $\Endnilnull{V}$ are nilpotent themselves, so $L^k$ and therefore $L$ are nilpotent.
\end{proof}
\noindent
Summarizing we have seen that
\begin{align*}
	\Endnil{V} 	&= \left\lbrace L \in \M{s}{\End{X}} \mid \pi (L) \text{ nilpotent} \right\rbrace \\
				&= \left\lbrace L \in \M{s}{\End{X}} \mid \kappa \pi (L) \text{ nilpotent} \right\rbrace.
\end{align*}

Recall from \autoref{eq:decomposition} that we may uniquely decompose elements $L \in \M{s}{\End{X}}$ as follows:
\begin{equation}
\label{eq:elt_decomp}
L = L_1 + L_2 \quad \text{where} \quad L_1 \in \Endnilnull{V}, L_2 \in W,
\end{equation}
which yields
\[ \pi (L) = \pi (L_2) \in \M{s}{\Endquot{X}}. \]
Therefore, using \autoref{lem:nilpotentprojection}, we obtain
\begin{align*}
L = L_1 + L_2 \in \Endnilnull{V} \oplus W \text{ nilpotent}	&\Leftrightarrow \pi (L) \in \M{s}{\Endquot{X}} \text{ nilpotent} \\
															&\Leftrightarrow \pi (L_2) \in \M{s}{\Endquot{X}} \text{ nilpotent} \\
															&\Leftrightarrow L_2 \in W \text{ nilpotent} \\
															&\Leftrightarrow \kappa \pi (L_2) \in \M{s}{\KK} \text{ nilpotent}
\end{align*}
and in conclusion
\begin{equation}
\label{eq:isotypicnilpotent}
\begin{split}
\Endnil{V} 	&= \left\lbrace L_1 + L_2 \mid L_2 \text{ nilpotent} \right\rbrace \\
			&= \left\lbrace L_1 + L_2 \mid \kappa \pi (L_2) \in \M{s}{\KK} \text{ nilpotent} \right\rbrace.
\end{split}
\end{equation}
Thus, we have to investigate nilpotent matrices in $\M{s}{\KK}$ to deepen our understanding of nilpotent endomorphisms of isotypic components. In the following we continue to speak of $\pi (L)$ for endomorphisms $L \in \End{V}$ identified with $\M{s}{\End{X}}$ and remember that this is the same as $\pi (L_2)$.

First of all, recall that matrices in $\M{s}{\KK}$ are noninvertible if they are nilpotent and furthermore that they are invertible if and only if they have full rank over $\KK$. The rank is defined to be the number of (right) linear independent column vectors or equally the number of (left) linear independent row vectors. These results are well known for real and complex matrices. For the quaternionic case consult \autocite{Zhang.1997} and the appendix. We may therefore decompose $\Endnil{V}$ as follows:
\[ \Endnil{V} = \bigcup_{i=1}^s J_i, \]
where $J_i = \left\lbrace L \in \End{V} \mid \kappa\pi(L) \text{ nilpotent,} \rk \kappa \pi(L) = s-i \right\rbrace$. Furthermore, we may embed the $J_i$ into larger collections by dropping the requirement to be nilpotent
\[ J_i \subset \Lambda_i = \left\lbrace L \in \End{V} \mid \rk \kappa \pi(L) = s-i \right\rbrace. \]
Let $\Mi{s}{\KK}$ denote the submanifold of matrices with rank $s-i$ in $\M{s}{\KK}$. Its dimension and codimension are known from \autoref{ap:codimrank} in the appendix. The $\Lambda_i$ are submanifolds of $\End{V}$ of the same codimension which can be seen from the decomposition in Equations \ref{eq:decomposition} and \ref{eq:isotypicnilpotent},
\begin{align*}
\codim \Lambda_i	&= \codim \Mi{s}{\KK} \\
					&= i^2 \dim \KK \\
					&= i^2 \Ind{X}
\end{align*}
More precisely speaking, $\pi$ and $\kappa$ are clearly surjective linear maps and therefore submersions of manifolds. Hence, $\pi^{-1} \kappa^{-1} (\Mi{s}{\KK}) \subset \End{V}$ is a submanifold of the same codimension. Recall that $i=1, \dotsc, s$ and note that this codimension is $1$ if and only if $i=1$ and $\Ind{X} = 1$ or equivalently $i=1$ and $\KK=\RR$. In that case we skip the embedding of $J_1$ into $\Lambda_1$. As is proven in \autoref{ap:realnil} in the appendix the real nilpotent matrices of rank $s-1$ form an $(s^2-s)$-dimensional submanifold of $\M{s}{\RR}$ that we call $\Mnil{s}{\RR}$. Thus the codimension of $J_1$ is
\begin{align*}
\codim J_1	&= \codim \Mnil{s}{\RR} \\
			&= s
\end{align*}
using the same argument as before for the first equality. This equals $1$ if and only if $s=1$. In this case we are considering $1\times1$ real matrices and the only nilpotent one is $0$. We summarize these results in the following theorem.
\begin{theorem}
	\label{thm:isotypicnilpotent}
	Let $X$ be an indecomposable finite-dimensional real representation of the monoid $\Sigma$ and $V=X^s$ for some $s \in \NN$. Then the set of nilpotent endomorphisms $\Endnil{V}$ is contained in the finite union of submanifolds of $\End{V}$ of codimensions:
	\begin{enumerate}[(i)]
		\item $s, i^2$ with $i=2, \dotsc, s$ if\/ $\Ind{X} = 1$ or
		\item $i^2 \Ind{X}$ with $i=1, \dotsc, s$ if\/ $\Ind{X}=2$ or $4$.
	\end{enumerate}
\end{theorem}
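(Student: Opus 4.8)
The plan is to assemble the structural results just established into a stratification of $\Endnil{V}$ by rank, and then to transport the known codimensions of the rank loci (and of the refined nilpotent-rank locus) in $\M{s}{\KK}$ back to $\End{V}$ through the map $\kappa\pi$. First I would recall from \autoref{eq:isotypicnilpotent} that an endomorphism $L \in \End{V} \cong \M{s}{\End{X}}$ is nilpotent if and only if $\kappa\pi(L) \in \M{s}{\KK}$ is nilpotent. Since a nilpotent matrix over $\KK$ is never invertible, its $\KK$-rank lies in $\lbrace 0, 1, \dotsc, s-1 \rbrace$, hence equals $s-i$ for a unique $i \in \lbrace 1, \dotsc, s \rbrace$. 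Sorting the nilpotent endomorphisms by this rank yields the decomposition $\Endnil{V} = \bigcup_{i=1}^s J_i$ with $J_i = \lbrace L \in \End{V} \mid \kappa\pi(L) \text{ nilpotent}, \ \rk \kappa\pi(L) = s-i \rbrace$, exactly as above. It therefore suffices to exhibit, for each of the finitely many indices $i$, a submanifold of $\End{V}$ of the asserted codimension that contains $J_i$.

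Next I would use that $\pi$ and $\kappa$ are surjective linear maps, hence submersions, so that the preimage under $\kappa\pi$ of any submanifold of $\M{s}{\KK}$ is again a submanifold of $\End{V}$ of the same codimension. Wherever the bare rank locus already has codimension at least two, I simply discard the nilpotency condition and bound $J_i \subset \Lambda_i = (\kappa\pi)^{-1}\bigl(\Mi{s}{\KK}\bigr)$. By \autoref{ap:codimrank} the locus $\Mi{s}{\KK}$ of matrices of rank $s-i$ has codimension $i^2 \dim \KK = i^2 \Ind{X}$, so $\Lambda_i$ is a submanifold of that codimension. When $\Ind{X} = 2$ or $4$ this already covers every index $i = 1, \dotsc, s$, since then $i^2 \Ind{X} \geq 2$ throughout, and it produces precisely the codimensions in case (ii). When $\Ind{X} = 1$ the same argument covers the indices $i = 2, \dotsc, s$, contributing the codimensions $i^2$ in case (i).

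The hard part, and the one index that resists this treatment, is $i = 1$ in the real case $\Ind{X} = 1$: there $\Lambda_1$ has codimension only $1^2 \cdot 1 = 1$, which is too coarse, so the nilpotency constraint cannot be dropped. Here I would instead invoke the refined appendix result \autoref{ap:realnil}, namely that the real nilpotent $s \times s$ matrices of rank $s-1$ form a submanifold $\Mnil{s}{\RR} \subset \M{s}{\RR}$ of dimension $s^2 - s$, that is, of codimension $s$. Pulling this back along the submersion $\kappa\pi$ once more, $J_1 = (\kappa\pi)^{-1}\bigl(\Mnil{s}{\RR}\bigr)$ is a submanifold of $\End{V}$ of codimension $s$, which supplies the remaining entry of case (i). Collecting the pieces, $\Endnil{V}$ is contained in the finite union of the listed submanifolds, which completes the proof.
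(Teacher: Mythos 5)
Your proposal is correct and follows essentially the same route as the paper: the same rank stratification $\Endnil{V} = \bigcup_{i=1}^s J_i$, the same embedding $J_i \subset \Lambda_i$ pulled back along the submersion $\kappa\pi$ using \autoref{ap:codimrank}, and the same special treatment of $i=1$ in the real case via \autoref{ap:realnil}. No gaps.
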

\begin{rem}
	This codimension equals $1$ if and only if the representation $X$ is absolutely indecomposable and the isotypic component consists of only one indecomposable summand. That is when $\Ind{X}=1$ and $s=1$.
\end{rem}

\subsection{Generalized kernels in generic one parameter familes of endomorphisms}
\label{subsec:gb}
We now return to arbitrary finite-dimensional real representations
\[ V= V_1 \oplus \dotso \oplus V_m, \]
where the $V_i$ are its isotypic components.
\begin{lemma}
	\label{lem:isotypicnilpotent}
	Let $L \in \End{V}$ be nilpotent and represented as
	\[ L = \quadmat{L}{m} \in \End{V} \]
	with $L^{ij} \in \Hom{V_j}{V_i}$. Then all the $L^{ii} \in \End{V_i}$ are nilpotent.
\end{lemma}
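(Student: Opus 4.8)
The plan is to reduce the claim to the diagonal by constructing an algebra homomorphism that collapses the off-diagonal coupling together with the ``blockwise nilpotent'' part of each diagonal block. First I would record the structure of the isotypic components, writing $V_i \cong X_i^{s_i}$ with $X_i$ indecomposable and $X_i \ncong X_k$ for $i \ne k$. By \autoref{lem:ideal} the set $\Endnilnull{V_i} = \M{s_i}{\Endnil{X_i}}$ is a two-sided ideal in $\End{V_i}$, and the quotient is $\End{V_i}/\Endnilnull{V_i} \cong \M{s_i}{\Endquot{X_i}}$.

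Next I would define the diagonal reduction map
\[ q \colon \End{V} \to \prod_{i=1}^m \End{V_i}/\Endnilnull{V_i}, \quad M \mapsto \left( M^{ii} + \Endnilnull{V_i} \right)_{i=1}^m, \]
and claim that $q$ is a homomorphism of real algebras. Linearity and unitality are immediate, so the only content is multiplicativity. For $M,P \in \End{V}$ the $i$-th diagonal block of the product is
\[ (MP)^{ii} = M^{ii}P^{ii} + \sum_{k \ne i} M^{ik}P^{ki}, \]
where each cross term $M^{ik}P^{ki}$ is an endomorphism of $V_i$ that factors through the distinct isotypic component $V_k$ (note $M^{ik}\in\Hom{V_k}{V_i}$ and $P^{ki}\in\Hom{V_i}{V_k}$). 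Since $X_i \ncong X_k$, \autoref{lem:homisotypic} applies and yields $M^{ik}P^{ki} \in \Endnilnull{V_i}$. Because $\Endnilnull{V_i}$ is an ideal, reduction modulo it annihilates these cross terms and is multiplicative on the remaining term $M^{ii}P^{ii}$, so $q(MP) = q(M)\,q(P)$. Equivalently, $\ker q = \lbrace M \mid M^{ii} \in \Endnilnull{V_i} \text{ for all } i \rbrace$ is a two-sided ideal of $\End{V}$.

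With $q$ established the conclusion is quick. Since $L$ is nilpotent and $q$ is an algebra homomorphism, the image $q(L) = \left( L^{ii} + \Endnilnull{V_i} \right)_i$ is nilpotent in the product algebra; but an element of a finite product of algebras is nilpotent exactly when each coordinate is, so every $L^{ii} + \Endnilnull{V_i}$ is nilpotent in $\End{V_i}/\Endnilnull{V_i} = \M{s_i}{\Endquot{X_i}}$. By \autoref{lem:nilpotentprojection}, applied to the isotypic component $V_i$, this is precisely the condition for $L^{ii} \in \End{V_i}$ itself to be nilpotent, which is the assertion.

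The only real obstacle is the multiplicativity of $q$, and it rests entirely on the cross-term identity $M^{ik}P^{ki} \in \Endnilnull{V_i}$; this is exactly where the hypothesis that distinct isotypic components are built from nonisomorphic indecomposables enters, via \autoref{lem:homisotypic}. It is worth stressing why one must pass to the quotient by $\Endnilnull{V_i}$ rather than argue directly: for $s_i > 1$ the nilpotent endomorphisms of $V_i$ do \emph{not} form an ideal (the matrix algebra $\M{s_i}{\KK}$ contains nilpotents that are not a one-sided, let alone two-sided, ideal), so one cannot simply discard the mixed terms as nilpotent and read off nilpotence of $(L^{ii})^n$ from $(L^n)^{ii}$. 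Collapsing the blockwise-nilpotent ideal first is what repairs this defect and lets the diagonal blocks inherit nilpotence from $L$.
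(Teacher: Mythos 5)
Your proof is correct, and it rests on exactly the same two pillars as the paper's argument --- \autoref{lem:homisotypic} to place the cross terms $M^{ik}P^{ki}$ with $k \ne i$ into the ideal $\Endnilnull{V_i}$, and the fact that an endomorphism of an isotypic component is nilpotent precisely when its class modulo that ideal is nilpotent (\autoref{lem:nilpotentprojection}) --- but it packages them differently. The paper argues by direct expansion: writing $L^n = 0$ blockwise gives
\[ 0 = \left(L^n\right)^{ii} = \left(L^{ii}\right)^n + \sum_{\exists r \colon l_r \ne i} L^{il_1} L^{l_1 l_2} \dotsm L^{l_{n-1}i}, \]
and every summand with some $l_r \ne i$ lies in $\Endnilnull{V_i}$ (group the factors around the index $l_r$ and apply \autoref{lem:homisotypic}), so $\left(L^{ii}\right)^n \in \Endnilnull{V_i}$ and is therefore nilpotent. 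Your quotient homomorphism $q$ automates this bookkeeping: checking multiplicativity of $q$ is precisely the $n=2$ instance of the paper's computation, and the homomorphism property then transports nilpotency of $L$ to the diagonal classes without ever writing out the $n$-fold expansion --- arguably the cleaner formulation. One small remark on your closing paragraph: the warning that one ``cannot simply discard the mixed terms as nilpotent'' is well taken against a naive argument (the nilpotents in $\M{s}{\KK}$ are indeed not closed under sums), but the direct expansion does go through without introducing $q$, because the mixed terms land not merely in the set of nilpotent endomorphisms but in the linear subspace (indeed two-sided ideal) $\Endnilnull{V_i}$, all of whose elements are nilpotent; this is exactly how the paper closes the argument.
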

\begin{proof}
	Let
	\[ L = \quadmat{L}{m} \in \End{V} \]
	and $n \in \NN$ such that $L^n = 0$. The blockwise entries of $L^n$ are
	\[ \left(L^n\right)^{ij} = \sum_{1 \le l_1, \dotsc , l_{n-1} \le m} L^{il_1} L^{l_1 l_2} \dotsm L^{l_{n-1}j}. \]
	Especially for $i=j$ we have
	\begin{align*}
	0 = \left(L^n\right)^{ii}	&= \sum_{1 \le l_1, \dotsc , l_{n-1} \le m} L^{il_1} L^{l_1 l_2} \dotsm L^{l_{n-1}i} \\
								&= \left( L^{ii} \right)^n + \sum_{1 \le l_1, \dotsc , l_{n-1} \le m \atop \exists r \colon l_r \ne i} L^{il_1} L^{l_1 l_2} \dotsm L^{l_{n-1}i}.
	\end{align*}
	\autoref{lem:homisotypic} tells us that 
	\[ L^{il_1} L^{l_1 l_2} \dotsm L^{l_{n-1}i} \in \Endnilnull{V_i} \]
	whenever there exists an $r$ such that $l_r \ne i$ -- all its blockwise components are nilpotent. As $\Endnilnull{V_i}$ is an ideal the same holds for
	\[ \left( L^{ii} \right)^n = - \sum_{1 \le l_1, \dotsc , l_{n-1} \le m \atop \exists r \colon l_r \ne i} L^{il_1} L^{l_1 l_2} \dotsm L^{l_{n-1}i}. \]
	This especially yields that $\left( L^{ii} \right)^n$ is nilpotent and thus the same holds true for $L^{ii}$ which completes the proof.
\end{proof}

We have assumed that $V$ splits as a sum of isotypic components. These furthermore decompose into indecomposable representations as follows:
\[ V_i \cong X_i^{j_i}, \]
where the $X_i \subset V$ are indecomposable and $j_i \in \NN$ suitable. If $L \in \End{V}$ is an arbitrary endomorphism, its generalized kernel $\gker{L}$ is a subrepresentation with a complement -- the reduced image $\redim{L}$. Hence, by the Krull-Schmidt theorem (\ref{thm:KrullSchmidt}) it is isomorphic to the direct sum of some of the indecomposable components of $V$:
\[ \gker{L} \cong X_{i_1}^{s_1} \oplus \dotso \oplus X_{i_k}^{s_k} \]
with $k\le m$, $1 \le i_1 < \dotso < i_k \le m$ and suitable $1 \le s_r \le j_{i_r} \in \NN$ . We may therefore classify endomorphisms according to their generalized kernels. Renaming its isotypic components
\[ W_r = X_{i_r}^{s_r} \]
we denote
\[ \Iso = \left\lbrace L \in \End{V} \mid \gker{L} \cong W_1 \oplus \dotso \oplus W_k \right\rbrace. \]
\begin{theorem}
	\label{thm:codim}
	Suppose $V$ decomposes as the direct sum of indecomposables
	\[ V \cong X_1^{j_1} \oplus \dotso \oplus X_m^{j_m} \]
	where the $X_i$ are pairwise nonisomorphic. Choose $k \le m$, $1 \le i_1 < \dotso < i_k \le m$ and $1 \le s_r \le j_{i_r}$ and rename
	$W_r = X_{i_r}^{s_r}$ for $r=1,\dotsc, k$.
	Then $\Iso$ is contained in the finite union of submanifolds of codimensions
	\[ \sum_{r=1}^{k} d_r \]
	where
	\[ d_r = \begin{cases}
		 s_r , p^2 \text{ with } p=2, \dotsc , s_r 			& \text{if} \quad \Ind{X_{i_r}} = 1 \\
		 p^2 \Ind{X_{i_r}} \text{ with } p=1, \dotsc, s_r 	& \text{if} \quad \Ind{X_{i_r}} = 2 \text{ or } 4.
	\end{cases} \]
\end{theorem}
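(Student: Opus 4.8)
The plan is to localize the problem around a fixed endomorphism $L_0 \in \Iso$ by means of \autoref{lem:Rink63}, and thereby to reduce the study of the generalized kernel of nearby endomorphisms to the study of the nilpotent endomorphisms of $\gker{L_0} \cong W_1 \oplus \dotso \oplus W_k$, to which \autoref{thm:isotypicnilpotent} applies componentwise. So fix $L_0 \in \Iso$. By \autoref{lem:Rink63} there is a neighborhood $U$ of $0 \in \End V$ and a map $\phi^{11} \colon U \to \End{\gker{L_0}}$ such that $L_0 + L$ is (equivariantly) conjugate to $\mathrm{diag}(\phi^{11}(L), \phi^{22}(L))$, with $\phi^{22}(L)$ invertible for $L$ small. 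From $\phi^{11}(L) = L_0^{11} + L^{11} + \mathcal{O}(\|L\|^2)$ its derivative at $0$ is the projection onto the $\gker{L_0}$-block, so $\phi^{11}$ is a submersion. Since $\phi^{22}(L)$ is invertible one gets $\gker{L_0 + L} \cong \gker{\phi^{11}(L)}$ as representations, and because $\gker{\phi^{11}(L)}$ is a subrepresentation of $\gker{L_0} \cong W_1 \oplus \dotso \oplus W_k$, a dimension count shows that $L_0 + L \in \Iso$ precisely when $\phi^{11}(L)$ is nilpotent. Thus, locally, $\Iso = (\phi^{11})^{-1}(\Endnil{\gker{L_0}})$.

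Next I would analyze $\Endnil{\gker{L_0}}$. Since the $X_i$ are pairwise nonisomorphic, the isotypic components of $\gker{L_0}$ are exactly $W_1, \dotsc, W_k$, so \autoref{lem:isotypicnilpotent} applies to $\gker{L_0}$ and shows that every nilpotent $M \in \End{\gker{L_0}}$ has nilpotent diagonal blocks $M^{rr} \in \End{W_r}$. Writing $\delta \colon \End{\gker{L_0}} \to \bigoplus_{r=1}^{k} \End{W_r}$ for the surjective (hence submersive) map extracting the diagonal blocks, this gives the inclusion $\Endnil{\gker{L_0}} \subseteq \delta^{-1}\bigl( \Endnil{W_1} \times \dotso \times \Endnil{W_k} \bigr)$. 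Now \autoref{thm:isotypicnilpotent}, applied to each $W_r = X_{i_r}^{s_r}$, places $\Endnil{W_r}$ inside a finite union of submanifolds of $\End{W_r}$ whose codimensions are exactly the numbers $d_r$ listed in the statement.

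The third step is the codimension bookkeeping. A product of submanifolds has codimension equal to the sum of the individual codimensions, and preimages under a submersion preserve codimension. Hence $\Endnil{W_1} \times \dotso \times \Endnil{W_k}$ lies in a finite union of submanifolds of codimensions $\sum_{r=1}^{k} d_r$ (one value $d_r$ chosen from each component's list); its preimage under $\delta$ lies in submanifolds of the same codimensions, and so does its preimage under $\phi^{11}$. Therefore, on $L_0 + U$, the set $\Iso$ is contained in a finite union of submanifolds of codimensions $\sum_{r=1}^{k} d_r$. Since codimension is a local notion and the finite list of possible values $\sum_r d_r$ depends only on the isomorphism type $W_1 \oplus \dotso \oplus W_k$ and not on the point $L_0$, these local descriptions assemble into the asserted global statement.

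I expect the main obstacle to be the reduction that makes the codimensions add, namely showing that $\Endnil{\gker{L_0}}$ is governed by its diagonal blocks alone. This is exactly where the hypothesis that the $X_i$ are pairwise nonisomorphic is indispensable: it is what allows \autoref{lem:homisotypic}, through \autoref{lem:isotypicnilpotent}, to force the off-diagonal blocks coupling distinct isotypic components to contribute only nilpotently, so that they cannot lower the codimension. The remaining verifications — that $\phi^{11}$ and $\delta$ are submersions, and that the conjugation in \autoref{lem:Rink63} preserves the representation-isomorphism type of the generalized kernel — are routine.
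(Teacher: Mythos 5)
Your proposal is correct and follows essentially the same route as the paper: localize at $L_0$ via \autoref{lem:Rink63}, observe that $\phi^{11}$ is a submersion and that membership in $\Iso$ near $L_0$ is equivalent to nilpotency of $\phi^{11}(L)$, then reduce via \autoref{lem:isotypicnilpotent} to the diagonal blocks and apply \autoref{thm:isotypicnilpotent} componentwise. Your explicit diagonal-extraction submersion $\delta$ and the local-to-global remark merely make precise steps the paper leaves implicit.
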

\begin{proof}
	Choose an arbitrary endomorphism $L_0 \in \Iso$ and decompose $V$ into the generalized kernel and reduced image of $L_0$:
	\[ V = \gker{L_0} \oplus \redim{L_0}. \]
	Recall from \autoref{lem:Rink63} that for an endomorphism $L \in \End{V}$ close enough to the zero endomorphism $0 \in \End{V}$ the sum $L_0+L$ is conjugate to 
	\[ \begin{pmatrix}
	\phi^{11}(L)	& 0 \\
	0				& \phi^{22}(L)
	\end{pmatrix} \]
	with respect to that decomposition. As $\phi^{22}(L) = L_0^{22} + \OO(\|L\|)$ and $L_0^{22}$ invertible, $\phi^{22}(L)$ is invertible as well. The generalized kernel $\gker{L_0+L}$ is therefore isomorphic to $\gker{L_0}$ if and only if $\phi^{11}(L)$ is nilpotent. As a matter of fact this means
	\[ L_0+L \in \Iso \Leftrightarrow \phi^{11}(L) \text{ nilpotent} \]
	for all $L$ in a suitable neighborhood $U$ of $0 \in \End{V}$.
	
	Furthermore, 
	\[ \phi^{11} \colon U \to \End{\gker{L_0}} \cong \End{W_1 \oplus \dotso \oplus W_k}\]
	with $\phi^{11}(L) = L_0^{11} + L^{11} + \OO(\|L\|^2)$. Hence $\phi^{11}$ is clearly a submersion and it suffices to prove that
	\[ \Endnil{W_1 \oplus \dotso \oplus W_k} \] 
	is contained in the union of submanifolds of $\End{W_1 \oplus \dotso \oplus W_k}$ of the specified codimensions. It then follows by an argument that has already been used before that
	\[ \left( \phi^{11} \right)^{-1} \left( \Endnil{\gker{L_0}} \right) \subset U \]
	is contained in the union of submanifolds of the same codimensions.
	
	Let $L \in \End{W_1 \oplus \dotso \oplus W_k}$ be arbitrary and decomposed respecting isotypic components
	\[ L = \quadmat{L}{k}. \]
	As we have seen in \autoref{lem:isotypicnilpotent} the block-diagonal elements are nilpotent if $L$ is nilpotent. Hence we can embed
	\[ \Endnil{W_1 \oplus \dotso \oplus W_k} \subset \left\lbrace L \in \End{W_1 \oplus \dotso \oplus W_k} \mid L^{rr} \in \Endnil{W_r} \text{ for all } 1 \le r \le k \right\rbrace, \]
	which we call $\Gamma$. \autoref{thm:isotypicnilpotent} tells us that each $\Endnil{W_r}$ is contained in the finite union of submanifolds of $\End{W_r}$ of codimensions
	\begin{equation} 
	\label{eq:dim}
	d_r = \begin{cases}
	s_r , p^2 \text{ with } p=2, \dotsc , s_r 			& \text{if} \quad \Ind{X_{i_r}} = 1 \\
	p^2 \Ind{X_{i_r}} \text{ with } p=1, \dotsc, s_r 	& \text{if} \quad \Ind{X_{i_r}} = 2 \text{ or } 4.
	\end{cases}.
	\end{equation}
	Hence $\Gamma$ is contained in the finite union of submanifolds of $\End{W_1 \oplus \dotso \oplus W_k}$ of codimensions
	\[ \sum_{r=1}^k d_r, \]
	where the $d_r$ are chosen as in \eqref{eq:dim}. This completes the proof.
\end{proof}
\begin{rem}
	Note that these codimensions are $0$ if and only if $k=0$. In that case the union of submanifolds from \autoref{thm:codim} contains all nonsingular matrices. The sum of these codimensions equals $1$ if and only if $k=1$, $s_1=1$, and $\Ind{W_1} = 1$. In that case the nilpotent endomorphisms form a real subspace and hence a proper submanifold. They are not only contained in one. In all other cases the sum of codimensions is at least $2$.
\end{rem}

We have now collected all measures to complete the proof of the main theorem.
\begin{maintheorem}
	Steady state bifurcations in one-parameter families of systems that are equivariant with respect to a finite-dimensional representation of a monoid generically occur along an absolutely indecomposable subrepresentation.
\end{maintheorem}
\begin{proof}[Proof of Main Theorem]
	Let $V$ be a finite-dimensional real representation of $\Sigma$ that decomposes as the direct sum of indecomposables
	\[ V \cong X_1^{j_1} \oplus \dotso \oplus X_m^{j_m}. \]
	 Choose $k \le m$, $1 \le i_1 < \dotso < i_k \le m$ and $1 \le s_r \le j_{i_r}$ and rename $W_r = X_{i_r}^{s_r}$ for $r=1,\dotsc, k$. \autoref{thm:codim} and the remark thereafter tell us that $\Iso$ is a submanifold of codimension $0$ if and only if $k = 0$. It is of codimension $1$ if and only if $k=1, s_1=1$, and $W_{i_1}$ is absolutely indecomposable. In all other cases $\Iso$ is contained in the finite union of submanifolds of codimension $2$ or higher. This especially means that
	\[ \zeta = \left\lbrace L \in \End{V} \mid \gker{L} \ne \lbrace 0 \rbrace \text{ is absolutely indecomposable} \right\rbrace \]
	is the finite union of submanifolds of $\End{V}$ of codimension $1$ and
	\[ \eta = \left\lbrace L \in \End{V} \mid \gker{L} \ne \lbrace 0 \rbrace \text{ is not absolutely indecomposable} \right\rbrace \]
	is contained in the finite union of submanifolds of $\End{V}$ of codimension $2$ or higher. This is due to the fact that we only have finitely many possibilities of choosing $k \le m$, \mbox{$1 \le i_1 < \dotso < i_k \le m$} and $1 \le s_r \le j_{i_r}$.
	
	Thom's transversality theorem (compare to \autocite{Hirsch.1976}) now tells us that $\zeta$ is intersected transversely -- especially in isolated points -- and $\eta$ is not intersected at all by a generic one parameter family of endomorphisms. Together with the considerations at the beginning of this section, this completes the proof.
\end{proof}

\section{An example}
\label{sec:ex}

Finally, we investigate the generic steady state bifurcations in an $8$-cell homogeneous coupled cell system to illustrate the use of the main theorem. Consider the network in \autoref{fig:ex}
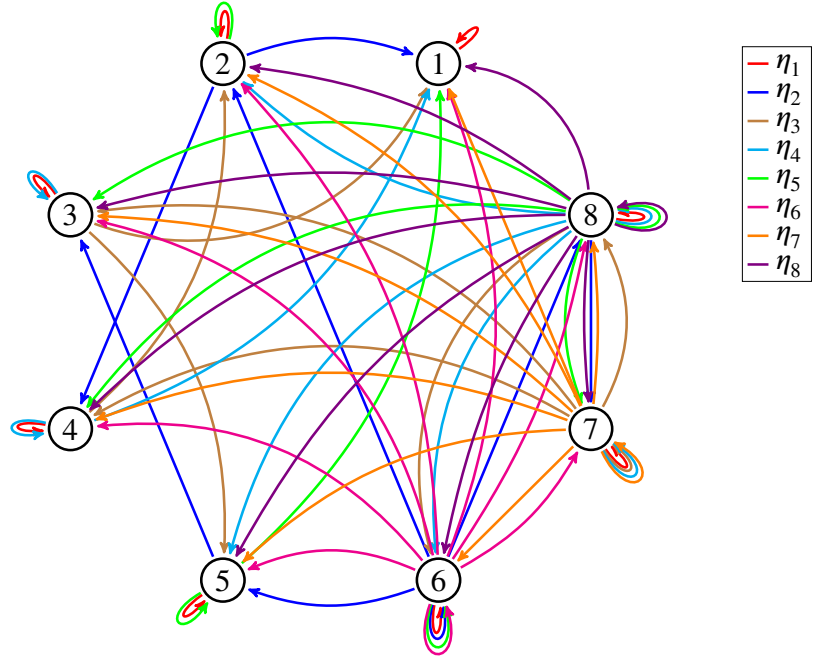
\begin{figure}[h]
	\begin{center}
		\resizebox{.7\linewidth}{!}{	
			\definecolor{c1}{named}{red}
			\definecolor{c2}{named}{blue}
			\definecolor{c3}{named}{brown}
			\definecolor{c4}{named}{cyan}
			\definecolor{c5}{named}{green}
			\definecolor{c6}{named}{magenta}
			\definecolor{c7}{named}{orange}
			\definecolor{c8}{named}{violet}
			
			\centering
			\begin{tikzpicture}[->,
			>=stealth',
			shorten >=1pt,
			auto,
			node distance=4cm,
			main node/.style={line width=2pt, circle, scale = 3,draw, font=\sffamily\scriptsize,inner sep=1.5pt}]
			\def\ngon{8}
			\node[main node, regular polygon,regular polygon sides=\ngon,minimum size=5cm,draw=none] (p) {};
			\node[main node] (1) at (p.corner 1){$1$};
			\node[main node] (2) at (p.corner 2){$2$};
			\node[main node] (3) at (p.corner 3){$3$};
			\node[main node] (4) at (p.corner 4){$4$};
			\node[main node] (5) at (p.corner 5){$5$};
			\node[main node] (6) at (p.corner 6){$6$};
			\node[main node] (7) at (p.corner 7){$7$};
			\node[main node] (8) at (p.corner 8){$8$};
			\path[every node/.style={font=\sffamily\small}, line width =2pt]
			(1) edge [in=49.21875, out=35.78125, looseness = 15, color = {c1}] node {} (1)
			(2) edge [in=94.21875, out=80.78125, looseness = 15, color = {c1}] node {} (2)
			(3) edge [in=139.21875, out=125.78125, looseness = 15, color = {c1}] node {} (3)
			(4) edge [in=184.21875, out=170.78125, looseness = 15, color = {c1}] node {} (4)
			(5) edge [in=229.21875, out=215.78125, looseness = 15, color = {c1}] node {} (5)
			(6) edge [in=274.21875, out=260.78125, looseness = 15, color = {c1}] node {} (6)
			(7) edge [in=319.21875, out=305.78125, looseness = 15, color = {c1}] node {} (7)
			(8) edge [in=364.21875, out=350.78125, looseness = 15, color = {c1}] node {} (8)
			(2) edge [bend left =20, color = {c2}] node {} (1)
			(6) edge [color = {c2}] node {} (2)
			(5) edge [color = {c2}] node {} (3)
			(2) edge [color = {c2}] node {} (4)
			(6) edge [bend left =20, color = {c2}] node {} (5)
			(6) edge [in=280.625, out=256.375, looseness = 11, color = {c2}] node {} (6)
			(8) edge [color = {c2}] node {} (7)
			(6) edge [color = {c2}] node {} (8)
			(3) edge [bend left =-42.890625, color = {c3}] node {} (1)
			(4) edge [bend left =-25.78125, color = {c3}] node {} (2)
			(7) edge [bend left =-30.671875, color = {c3}] node {} (3)
			(7) edge [bend left =-30.5625, color = {c3}] node {} (4)
			(3) edge [bend right =-25, color = {c3}] node {} (5)
			(8) edge [bend left =-38.34375, color = {c3}] node {} (6)
			(7) edge [in=324.03125, out=301.96875, looseness = 11, color = {c3}] node {} (7)
			(7) edge [bend left =-28.125, color = {c3}] node {} (8)
			(4) edge [bend left =-25.1875, color = {c4}] node {} (1)
			(8) edge [bend right =-20, color = {c4}] node {} (2)
			(3) edge [in=145.4375, out=121.5625, looseness = 11, color = {c4}] node {} (3)
			(4) edge [in=190.4375, out=166.5625, looseness = 11, color = {c4}] node {} (4)
			(8) edge [bend left =-30.9375, color = {c4}] node {} (5)
			(8) edge [bend left =-28.125, color = {c4}] node {} (6)
			(7) edge [in=327.4375, out=298.5625, looseness = 11, color = {c4}] node {} (7)
			(8) edge [in=370.4375, out=345.5625, looseness = 11, color = {c4}] node {} (8)
			(5) edge [bend left =-25.484375, color = {c5}] node {} (1)
			(2) edge [in=101.84375, out=75.15625, looseness = 11, color = {c5}] node {} (2)
			(8) edge [bend left =-34.453125, color = {c5}] node {} (3)
			(8) edge [bend left =-30.9375, color = {c5}] node {} (4)
			(5) edge [in=236.84375, out=210.15625, looseness = 11, color = {c5}] node {} (5)
			(6) edge [in=284.84375, out=253.15625, looseness = 11, color = {c5}] node {} (6)
			(8) edge [bend left =-20.390625, color = {c5}] node {} (7)
			(8) edge [in=373.84375, out=341.15625, looseness = 11, color = {c5}] node {} (8)
			(6) edge [bend left =-20.78125, color = {c6}] node {} (1)
			(6) edge [bend left =-20.5625, color = {c6}] node {} (2)
			(6) edge [bend left =-32.34375, color = {c6}] node {} (3)
			(6) edge [bend left =-28.125, color = {c6}] node {} (4)
			(6) edge [bend left =-23.90625, color = {c6}] node {} (5)
			(6) edge [in=287.25, out=250.75, looseness = 11, color = {c6}] node {} (6)
			(6) edge [bend left =-15.46875, color = {c6}] node {} (7)
			(6) edge [bend left =-11.25, color = {c6}] node {} (8)
			(7) edge [color = {c7}] node {} (1)
			(7) edge [bend left =-20.15625, color = {c7}] node {} (2)
			(7) edge [bend left =-20.234375, color = {c7}] node {} (3)
			(7) edge [bend left =-20.3125, color = {c7}] node {} (4)
			(7) edge [bend left =-20.390625, color = {c7}] node {} (5)
			(7) edge [color = {c7}] node {} (6)
			(7) edge [in=330.65625, out=295.34375, looseness = 11, color = {c7}] node {} (7)
			(7) edge [bend left =-5.625, color = {c7}] node {} (8)
			(8) edge [bend left =-39.375, color = {c8}] node {} (1)
			(8) edge [bend right = 15, color = {c8}] node {} (2)
			(8) edge [bend left =-15.125, color = {c8}] node {} (3)
			(8) edge [bend left =-22.5, color = {c8}] node {} (4)
			(8) edge [bend left =-16.875, color = {c8}] node {} (5)
			(8) edge [bend left =-11.25, color = {c8}] node {} (6)
			(8) edge [bend left =-5.625, color = {c8}] node {} (7)
			(8) edge [in=376.0625, out=338.9375, looseness = 11, color = {c8}] node {} (8)
			;
			
			\begin{customlegend}[legend cell align=left, 
			legend entries={ 
				$\eta_1$,
				$\eta_2$,
				$\eta_3$, 
				$\eta_4$,
				$\eta_5$,
				$\eta_6$,
				$\eta_7$,
				$\eta_8$
			},
			legend style={at={(13,7.5)},nodes={scale=2, transform shape}}] 
			\addlegendimage{-,c1,line width=2pt}
			\addlegendimage{-,c2,line width=2pt}
			\addlegendimage{-,c3,line width=2pt}
			\addlegendimage{-,c4,line width=2pt}
			\addlegendimage{-,c5,line width=2pt}
			\addlegendimage{-,c6,line width=2pt}
			\addlegendimage{-,c7,line width=2pt}
			\addlegendimage{-,c8,line width=2pt}
			\end{customlegend}
			
			\end{tikzpicture}
		}%
	\end{center}%
	\caption{An $8$-cell homogeneous coupled cell system.}
	\label{fig:ex}
\end{figure}%
\noindent
where each cell is subject to $1$-dimensional internal dynamics governed by its incoming arrows and a parameter $\lambda\in \RR$ via the same function $f \colon \RR^8 \times \RR \to \RR$. Denoting the state of cell $i$ by $x_i \in \RR$, it's behavior is driven by its inputs $\eta_1(i), \dotsc, \eta_8(i)$ through the ordinary differential equation
\[ \dot{x}_i = f (x_{\eta_1(i)}, \dotsc, x_{\eta_8(i)}) . \]
The corresponding parameter dependent vector field is
%
%
\begin{equation*}
F(x, \lambda) =
\begin{pmatrix}
f(\textcolor{c1}{x_{1}},\textcolor{c2}{x_{2}},\textcolor{c3}{x_{3}},\textcolor{c4}{x_{4}},\textcolor{c5}{x_{5}},\textcolor{c6}{x_{6}},\textcolor{c7}{x_{7}},\textcolor{c8}{x_{8}}, \lambda)\\
f(\textcolor{c1}{x_{2}},\textcolor{c2}{x_{6}},\textcolor{c3}{x_{4}},\textcolor{c4}{x_{8}},\textcolor{c5}{x_{2}},\textcolor{c6}{x_{6}},\textcolor{c7}{x_{7}},\textcolor{c8}{x_{8}}, \lambda)\\
f(\textcolor{c1}{x_{3}},\textcolor{c2}{x_{5}},\textcolor{c3}{x_{7}},\textcolor{c4}{x_{3}},\textcolor{c5}{x_{8}},\textcolor{c6}{x_{6}},\textcolor{c7}{x_{7}},\textcolor{c8}{x_{8}}, \lambda)\\
f(\textcolor{c1}{x_{4}},\textcolor{c2}{x_{2}},\textcolor{c3}{x_{7}},\textcolor{c4}{x_{4}},\textcolor{c5}{x_{8}},\textcolor{c6}{x_{6}},\textcolor{c7}{x_{7}},\textcolor{c8}{x_{8}}, \lambda)\\
f(\textcolor{c1}{x_{5}},\textcolor{c2}{x_{6}},\textcolor{c3}{x_{3}},\textcolor{c4}{x_{8}},\textcolor{c5}{x_{5}},\textcolor{c6}{x_{6}},\textcolor{c7}{x_{7}},\textcolor{c8}{x_{8}}, \lambda)\\
f(\textcolor{c1}{x_{6}},\textcolor{c2}{x_{6}},\textcolor{c3}{x_{8}},\textcolor{c4}{x_{8}},\textcolor{c5}{x_{6}},\textcolor{c6}{x_{6}},\textcolor{c7}{x_{7}},\textcolor{c8}{x_{8}}, \lambda)\\
f(\textcolor{c1}{x_{7}},\textcolor{c2}{x_{8}},\textcolor{c3}{x_{7}},\textcolor{c4}{x_{7}},\textcolor{c5}{x_{8}},\textcolor{c6}{x_{6}},\textcolor{c7}{x_{7}},\textcolor{c8}{x_{8}}, \lambda)\\
f(\textcolor{c1}{x_{8}},\textcolor{c2}{x_{6}},\textcolor{c3}{x_{7}},\textcolor{c4}{x_{8}},\textcolor{c5}{x_{8}},\textcolor{c6}{x_{6}},\textcolor{c7}{x_{7}},\textcolor{c8}{x_{8}}, \lambda)\\
\end{pmatrix}.
\end{equation*}
We investigate generic steady state bifurcations in this network using the method introduced in \autocite{Rink.2014} (in that setting the network is its own fundamental network). The network vector fields are precisely those vector fields on $\RR^8$ that are equivariant with respect to the representation of the monoid $\Sigma$ with eight elements generated by the linear transformations
\begin{align*}
\sigma_1 \colon (x_1,x_2,x_3,x_4,x_5,x_6,x_7,x_8) &\mapsto (x_2,x_6,x_4,x_8,x_2,x_6,x_7,x_8), \\
\sigma_2 \colon (x_1,x_2,x_3,x_4,x_5,x_6,x_7,x_8) &\mapsto (x_3,x_5,x_7,x_3,x_8,x_6,x_7,x_8).
\end{align*}
To translate the equivariance to the parameter dependent systems we assume the action is only on the spatial variable. 

To investigate steady state bifurcations we assume the existence of the trivial branch of solutions
\[ F(0,\lambda) = 0 \]
for all parameter values $\lambda \in \RR$. Furthermore, we want the bifurcation to occur at $\lambda_0=0$ which can only happen if $D_x F(0,0)$ is noninvertible due to the implicit function theorem. This means that the linearization $D_x F(0,0)$ has a nontrivial generalized kernel along which the steady state bifurcations may occur (compare to the beginning of \autoref{sec:gb}).

The representation of $\Sigma$ decomposes into four absolutely indecomposable components
\[ \RR^8 = X \oplus Y \oplus V \oplus W \]
where
\begin{align*}
X	&= \left\lbrace x_1 = \dotso = x_8 \right\rbrace, \\
Y	&= \left\lbrace x_2 = \dotso = x_8 = 0 \right\rbrace, \\
V	&= \left\lbrace x_1 = x_4, x_2 = x_5 = x_6 = x_8 = 0 \right\rbrace, \\
W	&= \left\lbrace x_1 = x_5, x_3 = x_4 = x_7 = x_8 = 0 \right\rbrace.
\end{align*}
The main theorem tells us that, generically, branches of steady states bifurcate off the trivial solution along one of these components meaning that the generalized kernels of linearizations of vector fields are generically equivalent to one of the components as subrepresentations. In each case we perform the equiavariant Lyapunov-Schmidt reduction (see \autocite{Rink.2014}) to restrict to an equivariant equation on the subrepresentation.

The subrepresentations $X$ and $Y$ are both one-dimensional and both transformations act trivially on them. On $X$ both $\sigma_1$ and $\sigma_2$ act as identity whereas $\sigma_1$ and $\sigma_2$ both act as zero on $Y$. Therefore, we expect a transcritical bifurcation in both cases. The transcritical bifurcation on $X$ is fully synchronous. The one on $Y$ occurs only in cell $1$. This is due to the fact that cell $1$ has no outgoing arrows into any other cell.

Choosing the basis
\[ (1,0,0,1,0,0,0,0)^T, (0,0,1,0,0,0,0,0)^T, (0,0,0,0,0,0,1,0)^T \]
for $V$ and
\[ (1,0,0,0,1,0,0,0)^T, (0,1,0,0,0,0,0,0)^T, (0,0,0,0,0,1,0,0)^T \]
for $W$ the transformations act as matrices $a, a'$ for $\sigma_1$ and $b, b'$ for $\sigma_2$ on $V$ and $W$, respectively, where
\[ a = \begin{pmatrix}
0	& 0	& 0 \\
1	& 0	& 0 \\
0	& 0	& 1
\end{pmatrix}, \quad
a' = \begin{pmatrix}
0	& 1	& 0 \\
0	& 0	& 1 \\
0	& 0	& 1
\end{pmatrix}, \quad
b = \begin{pmatrix}
0	& 1	& 0 \\
0	& 0	& 1 \\
0	& 0	& 1
\end{pmatrix}, \quad
b' = \begin{pmatrix}
0	& 0	& 0 \\
1	& 0	& 0 \\
0	& 0	& 1
\end{pmatrix}. \]
Therefore, $V$ and $W$ are equivalent as subrepresentations via the isomorphism $\varphi$ with matrix representation
\[ \begin{pmatrix}
0	& -1	& 1 \\
-1	& 0		& 1 \\
0	& 0		& 1
\end{pmatrix} \]
for $\varphi$ and $\varphi^{-1}$. Hence, we may restrict to $V$ as the generic steady state bifurcations on $W$ are the same in their specific coordinates. They only differ by their respective choice of a basis.

Performing the equivariant Lyapunov-Schmidt reduction onto $V$ and choosing coordinates $v_1,v_2,v_3$ we obtain the bifurcation equation
\[ r(v,\lambda) = \begin{pmatrix}
r_1 (v_1,v_2,v_3, \lambda) \\ r_2 (v_1,v_2,v_3, \lambda) \\ r_3 (v_1,v_2,v_3, \lambda)
\end{pmatrix} = 0 \]
with the properties inherited from $F$,
\begin{align*}
r (0,\lambda) = 0 \quad \text{for all} \quad \lambda \in \RR, \\
D_v r(0,0) \quad \text{has eigenvalue } 0.
\end{align*}
Furthermore, $r$ is equivariant in its spatial component with respect to $\sigma_1$ and $\sigma_2$ (which act as $a$ and $b$ on $V$). This provides the additional properties
\begin{alignat*}{4}
& r_1(0,v_1,v_3,\lambda)	&& = 0, \qquad							&& r_1(v_2,v_3,v_3,\lambda)	&= r_2(v_1,v_2,v_3,\lambda), \\
& r_2(0,v_1,v_3,\lambda)	&& = r_1 (v_1,v_2,v_3,\lambda), \qquad	&& r_2(v_2,v_3,v_3,\lambda)	&= r_3(v_1,v_2,v_3,\lambda), \\
& r_3(0,v_1,v_3,\lambda)	&& = r_3 (v_1,v_2,v_3,\lambda), \qquad	&& r_3(v_2,v_3,v_3,\lambda)	&= r_3(v_1,v_2,v_3,\lambda).
\end{alignat*}
These restrictions yield that up to second order we have to solve the equations
\begin{align*}
\alpha \lambda v_1 + \beta v_1^2 + \gamma v_1v_3 + \OO ( |\lambda|^2, \|(v,\lambda)\|^3) &= 0, \\
\alpha \lambda v_2 + \beta v_2^2 + \gamma v_2v_3 + \OO ( |\lambda|^2, \|(v,\lambda)\|^3) &= 0, \\
\alpha \lambda v_3 + (\beta + \gamma ) v_3^2 + \OO ( |\lambda|^2, \|(v,\lambda)\|^3) &= 0.
\end{align*}
Under the generic conditions that $\alpha, \beta \ne 0$ and $\gamma \ne - \beta$, we are given eight branches of solutions
\begin{alignat*}{6}
& v_1	&&= 0 \qquad														&& v_2	&&= 0 \qquad														&& v_3	&&= 0, \\
& v_1	&&= -\frac{\alpha}{\beta} \lambda + \OO(\lambda^2) \qquad			&& v_2	&&= 0 \qquad														&& v_3	&&= 0, \\
& v_1	&&= 0 \qquad														&& v_2	&&= -\frac{\alpha}{\beta} \lambda + \OO(\lambda^2) \qquad			&& v_3	&&= 0, \\
& v_1	&&= -\frac{\alpha}{\beta} \lambda + \OO(\lambda^2) \qquad			&& v_2	&&= -\frac{\alpha}{\beta} \lambda + \OO(\lambda^2) \qquad			&& v_3	&&= 0, \\
& v_1	&&= 0 \qquad														&& v_2	&&= 0 \qquad														&& v_3	&&= -\frac{\alpha}{\beta+\gamma} \lambda + \OO(\lambda^2) ,\\
& v_1	&&= -\frac{\alpha}{\beta+\gamma} \lambda + \OO(\lambda^2) \qquad	&& v_2	&&= 0 \qquad														&& v_3	&&= -\frac{\alpha}{\beta+\gamma} \lambda + \OO(\lambda^2) ,\\
& v_1	&&= 0 \qquad														&& v_2	&&= -\frac{\alpha}{\beta+\gamma} \lambda + \OO(\lambda^2) \qquad	&& v_3	&&= -\frac{\alpha}{\beta+\gamma} \lambda + \OO(\lambda^2) ,\\
& v_1	&&= -\frac{\alpha}{\beta+\gamma} \lambda + \OO(\lambda^2) \qquad	&& v_2	&&= -\frac{\alpha}{\beta+\gamma} \lambda + \OO(\lambda^2) \qquad	&& v_3	&&= -\frac{\alpha}{\beta+\gamma} \lambda + \OO(\lambda^2) .
\end{alignat*}
Returning to the original system these imply the coexistence of eight solution branches (the trivial one and seven transcritical ones) with different cells being synchronous:
\begin{align*}
x_1	&= x_2 = x_3 = x_4 = x_5 = x_6 = x_7 = x_8 = 0 , \\
x_1	&= x_4 = -\frac{\alpha}{\beta} \lambda + \OO(\lambda^2), \quad x_2 = x_3 = x_5 = x_6 = x_7 = x_8 = 0, \\
x_1 &= x_2 = x_4 = x_5 = x_6 = x_7 = x_8 = 0, \quad x_3 = -\frac{\alpha}{\beta} \lambda + \OO(\lambda^2), \\
x_1 &= x_3 = x_4 = -\frac{\alpha}{\beta} \lambda + \OO(\lambda^2), \quad x_2 = x_5 = x_6 = x_7 = x_8 = 0, \\
x_1	&= x_2 = x_3 = x_4 = x_5 = x_6 = x_8 = 0, \quad x_7 = -\frac{\alpha}{\beta+\gamma} \lambda + \OO(\lambda^2), \\
x_1	&= x_4 = x_7 = -\frac{\alpha}{\beta+\gamma} \lambda + \OO(\lambda^2), \quad x_2 = x_3 = x_5 = x_6 = x_8 = 0, \\
x_1	&= x_2 = x_4 = x_5 = x_6 = x_8 = 0, \quad x_3 = x_7 = -\frac{\alpha}{\beta+\gamma} \lambda + \OO(\lambda^2), \\
x_1	&= x_3 = x_4 = x_7 = -\frac{\alpha}{\beta+\gamma} \lambda + \OO(\lambda^2), \quad x_2 = x_5 = x_6 = x_8 = 0.
\end{align*}

\begin{appendix}
\renewcommand\thetheorem{\thesubsection.\arabic{theorem}}
\makeatletter
\@addtoreset{theorem}{subsection}
\makeatother
\renewcommand\theequation{\thesubsection.\arabic{equation}}
\makeatletter
\@addtoreset{equation}{subsection}
\makeatother
\section{Appendix}
We want to prove some results on submanifolds of the space $\M{n}{\KK}$ of $n\times n$ matrices over \mbox{$\KK = \RR, \CC$,} or $\HH$. We treat $\M{n}{\KK}$ as a real vector space meaning that we restrict scalar multiplication to real numbers. The result in the first proposition is well-known in the cases $\KK = \RR$ or $\KK = \CC$. The necessary arguments for the real case are sketched as an exercise in the equally known book \autocite{Guillemin.1974}. We state them here especially for the case $\KK = \HH$ even though they hold true for all three cases. For more details on quaternionic matrices consult \autocite{Zhang.1997}. We summarize some facts and definitions that are especially important for our considerations. 

Let us consider scalar multiplication by quaternions for a moment. As multiplication of quaternions is not commutative, we have to distinguish between left and right scalar multiplication and \emph{left} and \emph{right linear (in)dependence} (over $\HH$). The \emph{rank} of a quaternionic matrix $A$ is the number of right linear independent column vectors or equally the number of left linear independent row vectors of $A$. A quadratic matrix $A \in \M{n}{\HH}$ is invertible (there exists $B\in \M{n}{\HH}$ such that $AB = BA = \mathbbm{1}$) if and only if it has full rank $n$. Furthermore, $\rk PAQ = \rk A$ for any invertible matrices $P$ and $Q$ of suitable dimensions.
\begin{prop}
	\label{ap:codimrank}
	Let\/ $\KK = \RR, \CC$, or $\HH$ and $\M{n}{\KK}$ be the space of all $n \times n$ matrices with entries in $\KK$ considered as a real vector space. 
	Then
	\[ \Mr{n}{\KK} = \left\lbrace A \in \M{n}{\KK} \mid \rk A = r \right\rbrace \]
	with $r=0, \dotsc, n$ is a submanifold of codimension $(n-r)^2 \dim \KK$.
\end{prop}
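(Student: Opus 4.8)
The plan is to use the invariance $\rk PAQ = \rk A$ under multiplication by invertible matrices, recalled just before the statement, to make the problem local and reduce it to a single normal form. Fix $A_0 \in \Mr{n}{\KK}$. Since $\KK$ is a division ring, Gaussian elimination over $\KK$ produces invertible $P, Q \in \M{n}{\KK}$ with
\[ P A_0 Q = \begin{pmatrix} \mathbb{1}_r & 0 \\ 0 & 0 \end{pmatrix} =: A_*. \]
The map $\Psi \colon \M{n}{\KK} \to \M{n}{\KK}$, $A \mapsto PAQ$, is an $\RR$-linear isomorphism and hence a diffeomorphism of the real vector space $\M{n}{\KK}$; because it preserves rank it carries $\Mr{n}{\KK}$ onto itself and sends $A_0$ to $A_*$. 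It therefore suffices to show that $\Mr{n}{\KK}$ is a submanifold of the asserted codimension near the normal form $A_*$, the general statement near $A_0$ following by transporting the resulting chart with $\Psi^{-1}$.

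Near $A_*$ I would write a matrix in block form
\[ A = \begin{pmatrix} W & X \\ Y & Z \end{pmatrix}, \]
with $W$ of size $r \times r$, $Z$ of size $(n-r) \times (n-r)$, and $X, Y$ the off-diagonal blocks. On the open set $U$ on which $W$ is invertible (which contains $A_*$, where $W = \mathbb{1}_r$) the block operations
\[ \begin{pmatrix} \mathbb{1}_r & 0 \\ -YW^{-1} & \mathbb{1}_{n-r} \end{pmatrix} A \begin{pmatrix} \mathbb{1}_r & -W^{-1}X \\ 0 & \mathbb{1}_{n-r} \end{pmatrix} = \begin{pmatrix} W & 0 \\ 0 & Z - YW^{-1}X \end{pmatrix} \]
are left and right multiplications by invertible matrices and therefore preserve the rank. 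Reading off the block-diagonal right-hand side gives $\rk A = r + \rk(Z - YW^{-1}X)$, so on $U$ one obtains the clean characterization $\rk A = r \iff Z - YW^{-1}X = 0$.

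I would then introduce the Schur complement map $\Phi \colon U \to \M{n-r}{\KK}$, $\Phi(A) = Z - YW^{-1}X$, which is smooth as a map of real manifolds because matrix multiplication and inversion over $\KK$ are real-analytic. By the previous step, $\Mr{n}{\KK} \cap U = \Phi^{-1}(0)$. Since differentiating $\Phi$ in the $Z$-block direction alone already yields $\mathrm{D}\Phi = \mathrm{id}$, the map $\Phi$ is a submersion and $0$ is a regular value. Hence $\Phi^{-1}(0)$ is a submanifold whose codimension equals $\dim_\RR \M{n-r}{\KK} = (n-r)^2 \dim \KK$, which is exactly the claimed value; transporting back by $\Psi^{-1}$ finishes the proof.

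The routine ingredients — smoothness of $\Phi$, the existence of the rank normal form, and the count $\dim_\RR \M{n-r}{\KK} = (n-r)^2 \dim \KK$ (with $\dim \KK = 1, 2, 4$) — I would only state. The step demanding the most care, and the reason the case $\KK = \HH$ is singled out in the statement, is checking that the Schur complement rank identity survives the passage to noncommutative scalars: throughout the block computation one must keep the factors $W^{-1}, X, Y$ in their correct left/right order and appeal only to associativity together with the invariance $\rk PAQ = \rk A$, never to commutativity of the entries.
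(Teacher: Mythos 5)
Your proposal is correct and follows essentially the same route as the paper: both arguments localize to a neighborhood where the upper-left $r\times r$ block is invertible, characterize rank $r$ there by the vanishing of the Schur complement, and apply the regular value theorem to that smooth $\M{(n-r)}{\KK}$-valued map. The only differences are cosmetic — you first reduce $A_0$ to the full rank normal form by Gaussian elimination and clear both off-diagonal blocks, whereas the paper merely permutes rows and columns to make the corner block invertible and clears one off-diagonal block; the submersion check and the codimension count $(n-r)^2\dim\KK$ are identical.
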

\begin{proof}
	As $\mathbf{M}_0 \left(n ; \KK \right) = \lbrace 0 \rbrace$ and $\mathbf{M}_n \left(n ; \KK \right) = \lbrace A \in \M{n}{\KK} \colon A \text{ invertible} \rbrace$, the special cases $r=0$ and $r=n$ are clear. Hence, let $r \in \lbrace 1 , \dotsc , n-1 \rbrace$ and $L \in \Mr{n}{\KK}$. Then $L$ has  $r$ (right) linear independent column vectors $v_1, \dotsc , v_r$. Without loss of generality (by exchanging columns of $L$), we may assume that these are the first $r$ columns of $L$. The $n\times r$ matrix $(v_1, \dotsc, v_r)$ consisting of those column vectors still has rank $r$. Hence it has $r$ (left) linear independent row vectors. The exchanging of rows of said matrix allows us to assume that the first $r$ rows are (left) linear independent. Applying the same exchange of rows to the full matrix $L$ allows us to assume 
	\[ L = \begin{pmatrix}
	A	& B \\
	C	& D
	\end{pmatrix}, \]
	where $A \in \M{r}{\KK}$ is invertible -- especially $\rk A = r$ -- and $B \in \M{r \times (n-r)}{\KK}$, \mbox{$C \in \M{(n-r) \times r}{\KK}$} and $D \in \M{(n-r)}{\KK}$. Consider the matrix
	\[ L_0 = \begin{pmatrix}
	\mathbbm{1}	& -A^{-1}B \\
	0			& \mathbbm{1}
	\end{pmatrix}, \]
	where the dimensions of the identity matrices are suitably chosen. Then 
	\[ LL_0 = \begin{pmatrix}
	A	& 0 \\
	C	& -CA^{-1}B+D
	\end{pmatrix}. \]
	As $L_0$ is clearly invertible
	\[ r = \rk L = \rk LL_0 = \rk A + \rk \left(-CA^{-1}B+D\right) . \]
	But already $\rk A = r$ and thus
	\[ \rk \left(-CA^{-1}B+D\right) = 0, \]
	which is only fulfilled by the zero matrix $0 \in \M{(n-r)}{\KK}$. Furthermore, these considerations show that any block matrix 
	\[ K= \begin{pmatrix}
	\alpha	& \beta \\
	\gamma	& \delta
	\end{pmatrix}, \]
	whose upper left block $\alpha \in \M{r}{\KK}$ is invertible, has rank $r$ if and only if
	\[ -\gamma \alpha^{-1} \beta +\delta = 0. \]
	
	We may now choose a suitably small neighborhood $U$ around $L$ in a suitable topology such that every $K \in U$ is of the form
	\[ K= \begin{pmatrix}
	\alpha	& \beta \\
	\gamma	& \delta
	\end{pmatrix} \]
	with $\alpha \in \M{r}{\KK}$ invertible. If $K$ is close to $L$, then $\alpha$ is close to $A$ and hence is invertible as well. To see this in the quaternionic case we may use Theorem 7.3 in \autocite{Zhang.1997} which dates back to Wolf in 1936 and connects the rank of a quaternionic matrix to that of its complex adjoint matrix. On the neighborhood $U$ we define a map
	\begin{align*}
	f \colon U 	&\to \M{(n-r)}{\KK}, \\
	K			&\mapsto -\gamma \alpha^{-1} \beta +\delta.
	\end{align*}
	This map is smooth and we have seen that $K \in U$ has rank $r$ if and only if $f(K)=0$.
	
	We have to check that the derivative of $f$ at $K=L$ is surjective on tangent spaces. As the target space of $f$ is a linear space, its tangent space is the same space $\M{(n-r)}{\KK}$. To prove surjectivity let $X \in \M{(n-r)}{\KK}$ be arbitrary and consider the smooth curve
	\[ \nu (t) = L + t \begin{pmatrix}
	0	& 0 \\
	0	& X
	\end{pmatrix} \]
	with $t$ being restricted to an interval around $0$ so that $\nu (t) \in U$ for all $t$. Then
	\[ f(\nu(t)) = -\gamma \alpha^{-1} \beta +\delta + tX \]
	and
	\[ \frac{\mathrm{d}}{\mathrm{d}t} f(\nu(0)) = X. \]
	This proves surjectivity of $\mathrm{D}f$. Hence $\Mr{n}{\KK} \cap U = f^{-1}(0)$ is a submanifold of codimension
	\[ \codim \Mr{n}{\KK} = \dim \M{(n-r)}{\KK} = (n-r)^2 \dim \KK, \]
	which completes the proof.
\end{proof}

The next proposition treats nilpotent real matrices whose rank is reduced by one. Embedding them into all matrices of that rank would only yield a codimension $1$ submanifold using the last proposition. The proof relies on the normal form of matrices presented in \autocite{Arnold.1971}.
\begin{prop}
	\label{ap:realnil}
	The collection of real nilpotent matrices of rank $n-1$ is a submanifold of\/ $\M{n}{\RR}$ of codimension $n$.
\end{prop}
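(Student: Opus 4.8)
The plan is to identify the set in question with a single conjugacy class and to compute its dimension as an orbit of the conjugation action of $\mathrm{GL}_n(\RR)$ on $\M{n}{\RR}$, using Arnold's normal form to turn the dimension count into a genuine submanifold statement.

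First I would characterise the matrices involved. Conjugation preserves both nilpotency and rank, so the set $\mathcal N$ of nilpotent matrices of rank $n-1$ is a union of conjugacy classes. For a nilpotent matrix the dimension of the kernel equals the number of Jordan blocks, and here that dimension is $n - \rk = 1$. Hence every element of $\mathcal N$ has exactly one Jordan block, i.e. is conjugate over $\RR$ to the standard shift matrix $N_0$ with ones on the first superdiagonal and zeros elsewhere (real conjugacy suffices, since a nilpotent matrix has only the eigenvalue $0$ and hence a real Jordan form). Therefore $\mathcal N$ is precisely the orbit $\mathcal C$ of $N_0$ under the smooth action $g \cdot A = gAg^{-1}$ of $\mathrm{GL}_n(\RR)$.

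Next I would compute the dimension of $\mathcal C$. The tangent space to the orbit at $N_0$ is the image of the linear map $\mathrm{ad}_{N_0} \colon \M{n}{\RR} \to \M{n}{\RR}$, $X \mapsto X N_0 - N_0 X$, whose kernel is the centraliser of $N_0$. The matrices commuting with a single Jordan block are exactly the polynomials in $N_0$, that is, the upper triangular Toeplitz matrices, forming an $n$-dimensional space. By rank-nullity $\dim \im{\mathrm{ad}_{N_0}} = n^2 - n$, so $\mathcal C$ has dimension $n^2 - n$ and hence codimension $n$. This is exactly the phenomenon flagged in the preceding remark: imposing nilpotency collapses all rank-$(n-1)$ matrices onto a single orbit and raises the codimension far above the value $1$ that the naive embedding into all rank-$(n-1)$ matrices would suggest.

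It remains to promote this count to a submanifold statement, and here I would invoke Arnold's versal deformation theorem. Choosing a linear complement $T$ of $\im{\mathrm{ad}_{N_0}}$ in $\M{n}{\RR}$ (the $n$ normal-form parameters), the map $(g,t) \mapsto g (N_0 + t) g^{-1}$ realises a neighbourhood of $N_0$ as the product of a piece of the orbit with the transversal slice $N_0 + T$; equivalently, it provides coordinates in which $\mathcal C$ is cut out by the $n$ transversal coordinates. Since $\mathrm{GL}_n(\RR)$ acts transitively on $\mathcal C$, the same local picture holds at every point, and because $\mathcal C$ is locally closed (the condition $\rk = n-1$ is locally closed and nilpotency is closed), it is an embedded submanifold of codimension $n$. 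The main obstacle is precisely this last step: the dimension of the orbit is elementary once the centraliser is computed, but exhibiting explicit submanifold charts --- rather than merely an immersed orbit --- is what requires Arnold's transversal slice together with a check of local closedness.
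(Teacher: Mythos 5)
Your proof is correct, and it rests on the same two pillars as the paper's: the reduction of the set to a single conjugacy orbit (one Jordan block, since the kernel of a nilpotent matrix of rank $n-1$ is one-dimensional) and Arnold's versal deformation as the device that upgrades the orbit to a submanifold. The difference lies in the final step. The paper works explicitly inside the normal form: a small perturbation $K$ of $L$ is conjugate to the companion-type matrix with smoothly varying parameters $a_1, \dotsc, a_n$, and the observation that $K^n$ is conjugate to a matrix whose first row is $(a_1, \dotsc, a_n)$ shows that $K$ is nilpotent precisely when all $a_i$ vanish; locally the set is then the zero locus of $n$ independent smooth functions, so embeddedness and the codimension come out in one stroke. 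You instead obtain the number $n$ from the centraliser of the Jordan block (the $n$-dimensional algebra of polynomials in $N_0$) and then derive embeddedness from the general theorem that a locally closed orbit of a Lie group action is an embedded submanifold. Both routes are valid; yours makes the dimension count more conceptual (the paper simply cites the minimality of Arnold's deformation), while the paper's explicit nilpotency computation is exactly what certifies that the orbit meets the transversal slice only at the base point. On that note, your claim that the slice coordinates \emph{cut out} $\mathcal{C}$ does not follow from the local diffeomorphism alone --- an orbit could a priori accumulate on the slice --- so the local-closedness argument you append is carrying real weight and should be stated as an invocation of that (standard but nontrivial) orbit theorem, with a reference, rather than as a formality.
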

\begin{proof}
	Let $L \in \M{n}{\RR}$ be nilpotent and $\rk L = n-1$. This directly yields that the Jordan normal form of $L$ consists of precisely one Jordan block
	\[ \begin{pmatrix}
	0	& 1			& 			& \\
		& \ddots	& \ddots	& \\
		&			& \ddots	& 1 \\
		&			&			& 0
	\end{pmatrix}. \]
	\autocite{Arnold.1971} presents a matrix normal form that depends smoothly on the matrix -- more precisely speaking a versal deformation. Any suitably small perturbation $K$ of $L$ is conjugate to a matrix
\begin{equation}
	\label{eq:normal form}
	\tag{\ensuremath{\star}}
	\begin{pmatrix}
		0	& 1			& 			& \\
			& \ddots	& \ddots	& \\
			&			& 0			& 1 \\
		a_1	& \dots		& \dots		& a_n
	\end{pmatrix}
\end{equation}
	with $a_1, \dotsc, a_n \in \RR$ depending smoothly on $K$. As $K$ is an $n\times n$ matrix it is nilpotent if and only if $K^n = 0$. However, $K^n$ is conjugate to
	\[ \begin{pmatrix}
	a_1	& \dots	& a_n \\
		&*		&
	\end{pmatrix}, \]
	which can only be $0$ if $a_i = 0$ for all $i=1,\dotsc, n$. This yields that $K$ close to $L$ is nilpotent (and of rank $n-1$) if and only if it is conjugate to $L$. The deformation in \eqref{eq:normal form} is constructed to have the minimal number of parameters which is $n$. It equals the codimension of the conjugacy orbit of $L$. Therefore, the collection of matrices conjugate to $L$ is a submanifold of $\M{n}{\RR}$ of codimension $n$.
\end{proof}

\end{appendix}

\renewcommand{\abstractname}{Acknowledgements}
\begin{abstract}
\noindent
The author would like to thank Reiner Lauterbach for countless discussions and remarks that were of great help in setting up the proof in its current form.
\end{abstract}

\begingroup
\RaggedRight
\printbibliography
\endgroup
\phantomsection
\addcontentsline{toc}{section}{References} 
\end{document}